\definecolor{darkblue}{rgb}{0,0,.5}
\definecolor{blue-violet}{rgb}{0.54, 0.17, 0.89}
\definecolor{darkviolet}{rgb}{0.58, 0.0, 0.83}
\definecolor{violet-ryb}{rgb}{0.53, 0.0, 0.69}
\definecolor{violet}{rgb}{0.62, 0.0, 1.0}
\numberwithin{equation}{section}
\font\tencyr=wncyr10 
\font\tencyi=wncyi10 
\font\tencysc=wncysc10 
\def\rus{\tencyr\cyracc}
\def\rusi{\tencyi\cyracc}
\def\rusc{\tencysc\cyracc}
\newtheorem{thm}{Theorem}
\newtheorem{lm}{Lemma} 
\newtheorem{cl}[lm]{Corollary}
\newtheorem{prop}[lm]{Proposition}
\theoremstyle{remark}
\newtheorem{ex}[lm]{Example}
\newtheorem*{spec}{Speculation}
\newtheorem*{rem}{Remark}
\theoremstyle{definition}
\newtheorem{df}[lm]{Definition}
\newcommand {\eus}{\EuScript}
\newcommand {\bb}{{\boldsymbol{b}}}
\newcommand {\bF}{{\boldsymbol{F}}}
\newcommand{\gt}{\mathfrak}
\newcommand{\ind}{{\rm ind\,}}
\newcommand{\rk}{\mathrm{rk\,}}
\newcommand{\Lie}{\mathrm{Lie\,}}
\newcommand{\gr}{\mathrm{gr\,}}
\newcommand{\Sym}{\mathrm{symm}}
\newcommand{\ad}{\mathrm{ad}}
\newcommand {\ca}{{\mathcal A}}
\newcommand {\gc}{{\mathcal C}}
\newcommand {\cF}{{\mathcal F}}
\newcommand {\cS}{{\mathcal S}}
\newcommand {\U}{{\mathcal U}}
\newcommand {\trdeg}{{\mathrm{tr.deg\,}}}
\newcommand {\mK}{{\mathbb K}}
\newcommand {\mF}{{\mathbb F}}
\renewcommand{\le}{\leqslant}
\renewcommand{\ge}{\geqslant}
\font\euszw=eusm10 scaled 1200%
\font\eusac=eusm7 scaled 1200%
\font\eusacc=eusm7 scaled 1000%
\begin{document}
\hfill {\scriptsize January 30, 2020} 
\vskip1ex

\title[Commutative subalgebras of $\U(\gt q)$]
{Commutative subalgebras of $\U(\gt q)$ of maximal transcendence degree} 
\author{Oksana Yakimova}
\address{Institut f\"ur Mathematik, Friedrich-Schiller-Universit\"at Jena, Jena, 07737, Deutschland}
\email{oksana.yakimova@uni-jena.de}
\thanks{This work   is 
funded by the Deutsche Forschungsgemeinschaft (DFG, German Research Foundation) --- project  number  404144169.}
\maketitle


\section*{Introduction}

Let $\gt q$ be a finite-dimensional 
Lie algebra defined over a field $\mK$ of characteristic zero.  Then the universal enveloping algebra $\U(\gt q)$ is a filtered, associative,
non-commutative (in general) algebra and one may ask a natural question: 
\begin{itemize}
\item[(Q1)]
{\it how large can a commutative subalgebra of\/ $\U(\gt q)$  be? }
\end{itemize}

The symmetric algebra $\cS(\gt q)$ is the associated graded algebra of $\U(\gt q)$ 
and it carries the induced Poisson structure. If ${\mathcal A}\subset \U(\gt q)$ is a commutative algebra, then 
$\gr\!(\mathcal A)\subset\cS(\gt q)$ is a Poisson-commutative subalgebra, i.e.,  the Poisson bracket vanishes on it.  
Basic properties of the  coadjoint representation imply that  in this situation,  
$$
{\rm tr.deg}\,\gr\!(\mathcal A)\le (\ind\gt q+\dim\gt q)/2=:\bb(\gt q).
$$  
For a commutative algebra, the transcendence degree coincides with the Gelfand--Kirillov dimension and using a result of Borho and Kraft \cite[Satz~5.7]{bk-GK}, we obtain 
${\rm tr.deg}\, \mathcal A \le \bb(\gt q)$. 
This leads to a more precise formulation of the first question, 
\begin{itemize}
\item[(Q2)]
{\it is there a commutative subalgebra   ${\mathcal A}\subset\U(\gt q)$ such that 
$\trdeg\ca=\bb(\gt q)$? }
\end{itemize}
Our  main result, Theorem~\ref{thm-U},  asserts that the answer to (Q2) is positive. 

\vskip0.5ex

For   a nilpotent Lie algebra $\gt n$, the existence of a  commutative algebra  $\ca\subset\U(\gt n)$
with $\trdeg\ca=\bb(\gt n)$ is shown in \cite[Lemme~9]{GK}. That algebra $\ca$ plays a r\^ole in the proof of the 
Gelfand--Kirillov conjecture.

In case of a reductive Lie algebra $\gt g=\Lie G$, we have $\ind\gt g=\rk \gt g$ and $\bb(\gt g)$ is equal to the dimension of 
a Borel subalgebra $\gt b\subset\gt g$. 
Take $\gamma\in\gt g^*$ such that $\dim\gt g_\gamma=\rk\gt g$. 
Let $\bar\ca_\gamma\subset\cS(\gt g)$ be the {\it Mishchenko--Fomenko subalgebra} associated with $\gamma$, see Definition~\ref{MF}. Then $\{\bar\ca_\gamma,\bar\ca_\gamma\}=0$ \cite{mf3}
and $\trdeg\bar\ca_\gamma=\bb(\gt g)$ \cite{codim3}. The task of lifting $\bar\ca_\gamma$ to $\U(\gt g)$ is known as 
 {\it Vinberg's quantisation problem}. In full generality it is solved by L.\,Rybnikov \cite{r:si}. 
 The solution produces a commutative subalgebra $\ca_\gamma\subset \U(\gt g)$ such that
$\gr\!(\ca_\gamma)=\bar\ca_\gamma$. Thus, $\trdeg\ca_\gamma=\bb(\gt g)$ and this  
provides  the positive answer to (Q2) in the reductive case.  

\vskip0.5ex

The existence of a Poisson-commutative subalgebra
$\bar\ca\subset\cS(\gt q)$ with $\trdeg\bar\ca=\bb(\gt q)$ was conjectured by Mishchenko and Fomenko \cite{mf}. 
Their conjecture is proved  by Sadetov~\cite{sad}. 
In the proof he used a reduction to the    semisimple case.
The steps of that reduction are clarified in \cite{int}. 
There are two nice isomorphisms of certain invariants, see 
Sections~\ref{sec-i1} and \ref{sec-i2}, which are in the background of 
\cite{sad} and are proven in \cite{int}. 
Using these isomorphisms, we perform the same reduction on the level of 
$\U(\gt q)$.

\vskip0.5ex

Question (Q1) has two immediate generalisations. One can consider commutative 
subalgebras either of quotients of $\U(\gt q)$ or of some natural subrings.  
Both these instances turn out to be intricate. We will address quotients of $\U(\gt q)$ in 
a forthcoming paper. Some observations on commutative subalgebras of 
the invariant ring $\U(\gt q)^{\gt l}$, where $\gt l\subset\gt q$ is a Lie subalgebra, are presented in 
Section~\ref{l-inv}.     

Throughout the paper $\gt g$ stands for a reductive Lie algebra.

\section{Basic facts on Lie and Poisson structures} \label{sec-basic}

The symmetric algebra  $\cS(\gt q)$ is the algebra of regular functions $\mK[\gt q^*]$ on $\gt q^*$. 
For $\gamma\in\gt q^*$, let $\hat \gamma$ be the corresponding skew-symmetric form on $\gt q$ given by
$\hat \gamma(\xi,\eta)=\gamma\big([\xi,\eta]\big)$. Note that the kernel of $\hat\gamma$ is equal to the
stabiliser
\begin{equation} \label{eq-stab}
\gt q_\gamma=\{\xi\in\gt q\mid \ad^*(\xi)\gamma=0\}.
\end{equation}
Let $dF$ denote the differential of $F\in\cS(\gt q)$ and $d_\gamma F$ denote the differential of $F$ at $\gamma\in\gt q^*$. Then $d_\gamma F\in\gt q$. 
A well-known property of the Lie--Poisson bracket on $\cS(\gt q)$ is that 
$$
\{F_1,F_2\}(\gamma)=\hat\gamma(d_\gamma F_1,d_\gamma F_2) \ \ \text{ for all } \ \ F_1,F_2\in\cS(\gt q).
$$ 
The  {\it index} of $\gt q$, as defined by Dixmier,  is the number
\begin{equation} \label{eq-ind}
\ind\gt q=\min_{\gamma\in\gt q^*} \dim\gt q_\gamma
=\dim\gt q-\max_{\gamma\in\gt q^*}\rk\hat\gamma = \dim\gt q-\max_{\gamma\in\gt q^*} \dim(\gt q\gamma),
\end{equation} 
where $\gt q\gamma=\{\ad^*(\xi)\gamma \mid \xi\in\gt q\}$. 
The set of {\it regular\/} elements of $\gt q^*$ is 
$$\gt q^*_{\sf reg}=\{\eta\in\gt q^*\mid \dim \gt q_\eta=\ind\gt q\}.$$ 
Set $\gt q^*_{\sf sing}=\gt q^*\setminus \gt q^*_{\sf reg}$.

Suppose that  $\gt q=\Lie Q$ is an algebraic Lie algebra and $Q$ is a connected affine
algebraic group defined over $\mK$. Then $\dim(\gt q x)=\dim(Qx)$ for each $x\in\gt q^*$. 
By Rosenlicht's theorem, see e.g. \cite[Sect.~2.3]{VP}, we have $\trdeg\mK(\gt q^*)^Q=\ind\gt q$. 

Return to an arbitrary $\gt q$.  
For any subalgebra  $A\subset \cS(\gt q)$ and any $x\in\gt q^*$ set
$$
d_xA=\left<d_x F \mid F\in A\right>_{\mK}\subset T^*_x\gt q^*.
$$
Then 
${\rm tr.deg}\,A=\max\limits_{x\in\gt q^*}\dim d_xA$.
If $A$ is Poisson-commutative, then $\hat x(d_xA,d_xA)=0$ for each $x\in\gt q^*$ and
thereby
\begin{equation} \label{eq-trdeg}
\trdeg A\le \frac{\dim\gt q-\ind\gt q}{2}+\ind\gt q=\bb(\gt q)
\end{equation} 
as mentioned in the Introduction.

For any subalgebra $\gt l\subset \gt q$, let $\cS(\gt q)^{\gt l}$ denote the
{\it Poisson centraliser} of $\gt l$, i.e.,
$$
\cS(\gt q)^{\gt l}=\{F\in\cS(\gt q) \mid \{\xi,F\}=0 \ \text{ for all } \  \xi\in\gt l\}.
$$
The algebra of symmetric invariants $\cS(\gt q)^{\gt q}$ is the {\it Poisson centre} of $\cS(\gt q)$. 
The canonical symmetrisation map $\Sym\!: \cS(\gt q)\to \U(\gt q)$ is an isomorphism of 
$\gt q$-modules. Hence we have an isomorphism of vector spaces 
$\cS(\gt q)^{\gt l}$ and $\U(\gt q)^{\gt l}=\{ u \in\U(\gt q) \mid [u,\gt l]=0\}$ for each $\gt l$. 

According to \cite[Prop.~1.1]{m-y}, 
\begin{equation} \label{eq-trdeg-l}
\trdeg A\le \bb(\gt q)-\bb(\gt l)+\ind\gt l
\end{equation} 
for a Poisson-commutative subalgebra $A\subset\cS(\gt q)^{\gt l}$.   

\begin{df} \label{MF}
For $\gamma\in\gt q^*$,  let $\bar\ca_\gamma\subset\cS(\gt q)$ be the 
 corresponding
{\it Mishchenko--Fomenko subalgebra}, which is generated by all
$\gamma$-{\it shifts} $\partial^{k}_\gamma H$ with $k\ge 0$ of all elements $H\in\cS(\gt q)^{\gt q}$.
\end{df}

\noindent
Note that $\partial^{k}_\gamma H$ is a constant for $k=\deg H$.
We have $\{\bar\ca_\gamma,\bar\ca_\gamma\}=0$ \cite{mf3}.

\subsection{Abelian ideals and their invariants} \label{sec-i1}

Let $\gt h\lhd \gt q$ be an Abelian ideal consisting of $\ad$-nilpotent elements. 
Then $\gt h=\Lie H$, where $H$ is a unipotent algebraic group acting on $\gt q^*$ regularly.  
Since $\U(\gt h)$ is commutative, we have $\U(\gt h)=\cS(\gt h)$.  
Set $\mF=\mK(\gt h^*)$. Then $\gt h\subset \mF$. 
Let $\gt h\otimes_{\gt h}\mF$ be a  one-dimensional vector space over $\mF$
spanned by $\delta=w\otimes\dfrac{1}{w}$ with a non-zero $w\in\gt h$. 
Here $v\otimes 1 = v \delta$ for each $v\in\gt h$. 

\begin{rem} Notation $\gt h\otimes_{\gt h} \mF$ is borrowed from \cite{int}.  
It should be understood in the following way. 
Let us regard $\gt h$ as $\gt h{\cdot} 1$. Then $\gt h\otimes_{\gt h} \mF$ is an $\mF$-vector space spanned by $1\otimes 1$ with the property $v\otimes 1=1\otimes v$ for each $v\in\gt h$. 
\end{rem}


The tensor product  $\gt q\otimes_{\gt h} \mF$  
is an $\mF$-vector space of dimension $\dim\gt q-\dim\gt h+1$  and as such it can be 
identified with $(\gt q/\gt h)\otimes_{\mK}\mF\oplus \mF \delta$. 
Since $\gt h$ is an Abelian ideal, 
$H$ acts on $\mF$ trivially and we have an $\mF$-linear action of $H$ on $\gt q\otimes_{\gt h} \mF$. 
Set $\hat{\gt q}=(\gt q\otimes_{\gt h}\mF)^H$. 
The elements of $\hat{\gt q}$ are linear combinations of elements of $\gt q$ with coefficients from $\mF$. Therefore $\hat{\gt q}$ is a subset of the localised 
enveloping algebra $\U(\gt q)\otimes_{\U(\gt h)}\mF$. Note that $[\xi,w^{-1}]=-w^{-2}[\xi,w]\in\mF\delta$ for 
any $\xi\in\gt q$ and a non-zero $w\in\gt h$. Hence 
$$
[\hat{\gt q},\hat{\gt q}]\subset \gt q\otimes_{\gt h} \mF \subset \U(\gt q)\otimes_{\U(\gt h)}\mF.
$$
Clearly, the commutator of two $H$-invariant elements is again an $H$-invariant. Thus, 
\begin{equation} \label{com}
[\hat{\gt q},\hat{\gt q}]\subset \hat{\gt q}
\end{equation}
and $\hat{\gt q}$ is a finite-dimensional Lie algebra over $\mF$. Furthermore, $\delta\in\hat{\gt q}$.
In view of the fact that $[\hat{\gt q},\gt h]=0$, one can write the Lie bracket of $\hat{\gt q}$ 
in  down to earth terms: 
$$
\left[\sum\limits_{j=1}^{N} c_j \xi_j, \sum\limits_{j=1}^N b_j \eta_j\right]= \sum\limits_{i,j} c_j  b_i[\xi_j,\eta_i]
\ \ \text{ for } \ \ \sum\limits_{j=1}^{N} c_j \xi_j, \sum\limits_{j=1}^N b_j \eta_j\in\hat{\gt q} \   \text{ with } \ 
c_j,b_j\in\mF,\, \xi_j,\eta_j\in\gt q. 
$$
Working over $\mF$, we let $\U(\hat{\gt q})$ stand for the enveloping algebra of 
$\hat{\gt q}$. Then clearly  $\mF\subset \U(\hat{\gt q})$. At the sam time, 
$\delta\in   \U(\hat{\gt q})$ and $\delta\not\in\mF\subset \U(\hat{\gt q})$. 
Let $\U_{\delta}(\hat{\gt q})$ be the subalgebra of $\U(\gt q)\otimes_{\U(\gt h)}\mF$
generated by $\hat{\gt q}$.  Then $\U_{\delta}(\hat{\gt q})\cong \U(\hat{\gt q})/(\delta-1)$
as an associative $\mF$-algebra. 


\begin{ex} \label{ex-s}
Suppose that $\gt q=\gt s\ltimes\gt h$, where $\gt s$ is a
subalgebra of $\gt q$. Then 
 \begin{equation} \label{eq.hat-q}
 \hat{\gt q}=\{\xi\in \gt s\otimes_{\mK}\mF  \mid [v,\xi]=0  \ \forall v\in \gt h\} \oplus \mF\delta, 
\end{equation}
see \cite[Lemma~2.1]{Y-imrn}.
We note that there is an unfortunate misprint in Remark~2.6 in \cite{Y-imrn} and that the Lie bracket on $\hat{\gt q}$, which is defined by 
the inclusion $\hat{\gt q}\subset \U(\gt q)\otimes_{\U(\gt h)}\mF$, is the same as the one extended from $\gt s$. 
\end{ex}

Let $\{\xi_1,\ldots,\xi_m\}$ be a basis for a complement of $\gt h$ in $\gt q$ and 
$\{\eta_1,\ldots,\eta_r\}$ be a basis of $\gt h$. 
Then 
\begin{equation} \label{eq-hatq-lin}
\hat{\gt q}=\mF\delta\oplus \left\{ \sum\limits_{i=1}^{m} c_i \xi_i \mid c_i\in \mF,  \sum\limits_{i=1}^{m} c_i [\xi_i,\eta_j]=0 \ \forall j, 1\le j\le r\right\}, 
\end{equation}
where each $[\xi_i,\eta_j]\in\gt h$ is regarded as an element of $\mF$. 
The rank of the $m{\times}r$-matrix $(\boldsymbol{m}_{ij})$ with $\boldsymbol{m}_{ij}=[\xi_i,\eta_j]$ is equal to 
the dimension of $\gt q\alpha\subset\gt h^*$ for a generic $\alpha\in\gt h^*$. 
Hence
\begin{equation} \label{eq-hatq-dim}
\dim_{\mF} \hat{\gt q}= \dim\gt q-\dim\gt h-\max\limits_{\alpha\in\gt h^*} \dim(\gt q\alpha) + 1= 
\min_{\alpha\in\gt h^*} \dim\gt q_\alpha - \dim\gt h +1. 
\end{equation}
In these terms, $\sum\limits_{i=1}^{m} c_i \xi_i \in\hat{\gt q}$ if and only if 
$\sum\limits_{i=1}^{m} c_i(\alpha)\xi_i\in\gt q_\alpha$ whenever all 
$c_i$ are defined for $\alpha\in\gt h^*$.

Let $\U(\gt q)=\bigcup_{d\ge 0}\U_d(\gt q)$ be the standard  filtration on $\U(\gt q)$. 
Set ${\mathcal W}_d=\U_d(\gt q)\U(\gt h)$. Then clearly $\U(\gt q)=\bigcup_{d\ge 0}{\mathcal W}_d(\gt q)$. 
Assume that ${\mathcal W}_{-1}=0$.
Since $\gt h$ is an Abelian ideal, we have a non-canonical isomorphism of commutative alebras
$$
\gr\!_{\mathcal W} \U(\gt q)= \bigoplus_{d\ge 0} {\mathcal W}_d/ {\mathcal W}_{d-1} \cong \cS(\gt q). 
$$ 
The new filtration extends to $\U(\gt q)\otimes_{\U(\gt h)}\mF$ and on 
$\U_{\delta}(\hat{\gt q})\subset \U(\gt q)\otimes_{\U(\gt h)}\mF$ it coincides with the 
standard  filtration inherited by the quotient  $\U(\hat{\gt q})/(\delta-1)$ from 
$\U(\hat{\gt q})$.

The algebra $\hat{\gt q}$ coincides with the algebra $\widetilde{\gt q}=\widetilde{\gt q}(I_0)$, defined  
in \cite[Sect.~4]{int},  in the particular case $I_0=\{0\}$. 
Therefore $\hat{\gt q}$ is  the quotient of the Lie algebra 
of all rational maps 
$$
\xi\!: \gt h^*\to\gt q \ \text{ such that } \
\xi(\alpha)\in\gt q_\alpha \ \text{ whenever } \ \xi(\alpha) \ \text{ is defined} 
$$
by $\hat{\gt h}:=\{\xi\in\gt h\otimes_{\mK}\mathbb F\mid
\alpha(\xi(\alpha))=0 \mbox{ for each } \alpha\in {\gt h}^*
   \ \text{ such that $\xi(\alpha)$ is defined}\}.$ 

\begin{lm} \label{lm-u-h} {\sf (i)} $(\U(\gt q)\otimes_{\U(\gt h)}\mF)^H=\U_{\delta}(\hat{\gt q})$. \\[.2ex]
{\sf (ii)} $\bb(\hat{\gt q})=\bb(\gt q)-\dim\gt h+1$.   
\end{lm}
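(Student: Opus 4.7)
The plan is to prove (i) via the associated graded algebra and (ii) by direct dimension and index computations.

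For (i), the inclusion $\U_\delta(\hat{\gt q})\subseteq(\U(\gt q)\otimes_{\U(\gt h)}\mF)^H$ is immediate: by construction $\hat{\gt q}=(\gt q\otimes_{\gt h}\mF)^H$, and $H$ acts on $\U(\gt q)\otimes_{\U(\gt h)}\mF$ by algebra automorphisms while fixing $\mF$ pointwise (since $\gt h$ is abelian). For the converse inclusion, I work with the $\mathcal{W}_\bullet$-filtration localised to $\mF$; its associated graded is $\cS(\gt q)\otimes_{\cS(\gt h)}\mF\cong\mF[\xi_1,\dots,\xi_m]$, where $\xi_1,\dots,\xi_m$ is a basis of a complement of $\gt h$. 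On this polynomial algebra $H$ acts $\mF$-linearly by translation $\xi_i\mapsto\xi_i+\sum_j c_j[\eta_j,\xi_i]$. Invariants of a translation action on such a polynomial ring are generated over $\mF$ by the fixed linear forms $\sum c_i\xi_i$, which by (\ref{eq-hatq-lin}) are precisely the non-$\mF\delta$ part of $\hat{\gt q}$. Since the leading symbol of any $H$-invariant element is itself $H$-invariant, an induction on the filtration degree lifts this back to give $(\U(\gt q)\otimes_{\U(\gt h)}\mF)^H\subseteq\U_\delta(\hat{\gt q})$.

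For (ii), the formula $\dim_\mF\hat{\gt q}=\min_\alpha\dim\gt q_\alpha-\dim\gt h+1$ is already (\ref{eq-hatq-dim}). To compute $\ind_\mF\hat{\gt q}$, I would pick $\gamma\in\gt q^*_{\sf reg}$ whose restriction $\alpha=\gamma|_\gt h$ is generic in $\gt h^*$, and build an $\mF$-linear functional $\hat\gamma\in\hat{\gt q}^*$ by evaluating the rational maps $\xi\colon\alpha'\mapsto\xi(\alpha')\in\gt q_{\alpha'}$ along a section of $\gt q^*\to\gt h^*$ passing through $\gamma$. The skew form of $\hat\gamma$ on $\hat{\gt q}$ has $\mF\delta$ in its kernel (as $\delta$ is central), and on the complement it coincides generically with the $\gamma$-induced skew form on the $\mK$-quotient $\gt q_\alpha/\gt h$ (well-defined because $\gt h\subseteq\gt q_\alpha$ and $\gamma|_{[\gt q_\alpha,\gt h]}=\alpha|_{[\gt q_\alpha,\gt h]}=0$). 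A rank count, combined with the orbit-stabiliser identity $\dim\gt q=\dim\gt q_\alpha+\dim\gt q\alpha$, gives
$$\dim_\mF\hat{\gt q}+\ind_\mF\hat{\gt q}=\dim\gt q+\ind\gt q-2\dim\gt h+2,$$
whence $\bb(\hat{\gt q})=\bb(\gt q)-\dim\gt h+1$.

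The main obstacle lies in the rank computation for the skew form of $\hat\gamma$ on $\hat{\gt q}$. One must carefully track how the kernel $\gt q_\gamma$ of the full $\gamma$-skew form on $\gt q$ relates to the kernel of the restricted form on $\gt q_\alpha$, which in general is strictly larger (it always contains $\gt h$). The correct count yields $\ind_\mF\hat{\gt q}=\ind\gt q-e+1$, where $e=\dim\gt h-\max_\alpha\dim\gt q\alpha$ is the codimension of the generic $\gt q$-orbit in $\gt h^*$; the $+1$ reflects the central line $\mF\delta$, while the $-e$ comes from the rank deficit induced by the $\gt q$-orbit structure on $\gt h^*$. Verifying this on the nose requires showing that the regular stratum $\gt q^*_{\sf reg}$ meets the preimage of the generic stratum of $\gt h^*$, which follows from an open-density argument.
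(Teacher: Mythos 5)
Your proposal is correct, and for part (i) you take a visibly more self-contained route than the paper. The paper proves $\cS(\gt q)^H\otimes_{\cS(\gt h)}\mF=\cS_\delta(\hat{\gt q})$ by restricting to a generic $\alpha\in\gt h^*$ and invoking \cite[Lemmas~20,\,21]{int}: the quotient $Y_\alpha/H$ is affine and isomorphic to $(\gt q_\alpha/\gt h)^*\times\{\alpha\}$, and the two sides have the same image $\cS(\gt q_\alpha/\gt h)$ under the evaluation map $\epsilon_\alpha$. You instead observe that $\cS(\gt q)\otimes_{\cS(\gt h)}\mF\cong\mF[\xi_1,\dots,\xi_m]$ carries a \emph{translation} action of the unipotent group $H$ (the quadratic term in $\Ad(\exp v)$ dies because $\gt h$ is an abelian ideal), so the invariant ring is the polynomial ring on the fixed linear forms, which by \eqref{eq-hatq-lin} are exactly $\hat{\gt q}/\mF\delta$. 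This avoids the algebro-geometric input about $Y_\alpha/H$ entirely and is arguably cleaner, though one should spell out (as you only hint) why $\mathbb{G}_a^r(\mK)$-invariance of a polynomial over $\mF$ forces invariance under the full $\mF$-span of the translation vectors; this is a one-line argument using that $\mK$ is infinite and the variation in the group parameter is polynomial. Your lift from $\cS$-level back to $\U$-level by descending induction on the $\mathcal W$-filtration (using that $\gr$ of an $H$-invariant is $H$-invariant and that $\gr\colon\U_\delta(\hat{\gt q})\to\cS_\delta(\hat{\gt q})$ is surjective) is a standard alternative to the paper's appeal to the symmetrisation map; both are fine.

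For part (ii) your argument is essentially the one in the paper: choose $\gamma\in\gt q^*$ generic with $\alpha=\gamma|_{\gt h}$ generic in $\gt h^*$, note that $\mF\delta$ is central, and compute the rank of the induced skew form on $\hat{\gt q}$ via a block decomposition of $\hat\gamma$ on $\gt q=\gt m\oplus(\gt r\oplus\gt h)$ with $\gt q_\alpha=\gt r\oplus\gt h$. Your bookkeeping ($e=\dim\gt h-\max_\alpha\dim\gt q\alpha$, $\ind_\mF\hat{\gt q}=\ind\gt q-e+1$) is arithmetically equivalent to the paper's identity $\dim\gt q-\ind\gt q-2k=\dim_\mF\hat{\gt q}-\ind\hat{\gt q}$ with $k=\dim(\gt q\alpha)$. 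Your caution about the restricted skew form on $\gt q_\alpha/\gt h$ having a larger kernel than $\gt q_\gamma$, and the need for $\gt q^*_{\sf reg}$ to meet the preimage of the generic stratum of $\gt h^*$, correctly identifies the genericity that the paper leaves implicit in the phrase that the numerical characteristics of $\hat{\gt q}$ can be computed ``locally, at $\alpha$''.
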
 
\begin{proof} {\sf (i)} \ 
Set $\cF=(\U(\gt q)\otimes_{\U(\gt h)}\mF)^H$. Note that $\U_{\delta}(\hat{\gt q})\subset \cF$ by the construction. 
Since the action of $H$ on $\gt h^*$ is trivial, we have also 
$\cF=\U(\gt q)^H\otimes_{\U(\gt h)}\mF$.  Because $\hat{\gt q}$ is a Lie algebra over $\mF$, it suffices to show that 
$\U(\gt q)^H\subset \U_{\delta}(\hat{\gt q})$. Employing the filtration $\U(\gt q)=\bigcup_{d\ge 0}{\mathcal W}_d(\gt q)$ and the symmetrisation map 
one readily reduces the claim 
to the   level of $\cS(\gt q)^H$. 

The assertion that $\cS(\gt q)^H\otimes_{\cS(\gt h)}\mF\cong \cS(\hat{\gt q})/(\delta-1)$ is  contained implicitly in 
\cite[Lemma~21]{int}. For the sake of completeness, we briefly recall the argument. 
Set $\boldsymbol{F}= \cS(\gt q)^H\otimes_{\cS(\gt h)}\mF$.

Now we consider $\hat{\gt q}$ as subset of $\bF$ identifying $\delta$ with $1$. 
Both, $\bF$ and the subalgebra 
$\cS_{\delta}(\hat{\gt q})\subset\bF$ generated by $\hat{\gt q}$, are 
vector spaces over $\mK({\gt h}^*)$. 
Thus, it suffices to verify  
the equality $\bF=\cS_{\delta}(\hat{\gt q})$  at generic $\alpha\in{\gt h}^*$.

Let  $Y_\alpha\subset \gt q^*$
be the preimage of $\alpha$ under the canonical  restriction $\gt q^*\to\gt h^*$. 
Since $\gt h$ is commutative, $H$ acts on $Y_\alpha$. 
By \cite[Lemma~20]{int},  $Y_\alpha/H={\rm Spec}(\mK[Y_\alpha]^H)$
and the restriction map
$\pi_\alpha\!:Y_\alpha\to(\gt q_\alpha)^*$ defines an isomorphism
$Y_\alpha/H\cong(\gt q_\alpha/\gt h)^*\times\{\alpha\}$.

Let ${\bF}_{\alpha}\subset\bF$ be the subset of elements  that are defined
at $\alpha$. Then  for any $\alpha\in\gt h^*$, we have a 
map
$$
\epsilon_\alpha\!: {\bF}_{\alpha}\to
  \mK[Y_\alpha]^H\cong\mK[(\gt q_\alpha/\gt h)^*\times\{\alpha\}]\cong\cS(\gt q_\alpha/\gt h).
$$
Eq.~\eqref{eq-hatq-dim} and the discussion after it imply that
$\gt q_\alpha/\gt h$ embeds into  $\epsilon_\alpha(\hat{\gt q}\cap {\bF}_{\alpha})$ 
for a generic point $\alpha$. Hence, if $\alpha$ is generic, then 
$\epsilon_\alpha(\cS_\delta(\hat{\gt q})\cap\bF_\alpha)\cong\cS(\gt q_\alpha/\gt h)$, cf. the proof of  \cite[Lemma~21]{int}. 
Therefore, $\epsilon_\alpha(\bF_\alpha)=\epsilon_\alpha(\cS_\delta(\hat{\gt q})\cap\bF_\alpha)$ and 
we can conclude that  $\bF=\cS_\delta(\hat{\gt q})$. \\[.4ex]
{\sf (ii)} This part is proven in \cite[Sect.~5]{int}. 
Let $\alpha\in\gt h^*$ and $\gamma\in Y_\alpha$ be generic. 
Set  $k=\dim(H\gamma)$.  
Since the form  $\alpha([\,\,,\,])$ defines a non-degenerate pairing 
between $\gt q/\gt q_\alpha$ and $\gt h/\gt h_\gamma$, we have also 
$k=\dim(\gt q \alpha)$.  Note that $\dim_{\mF}\hat{\gt q}=\dim\gt q-k-\dim\gt h+1$ by Eq.~\eqref{eq-hatq-dim}. 

The numerical characteristics of $\hat{\gt q}$, like 
index, can be computed locally, at
$\alpha$, so to say.  In particular, 
$\dim{\gt q}_\alpha-\ind{\gt q}_\alpha=\dim_{\mF}\hat{\gt q}-\ind\hat{\gt q}$.

Write $\gt q=\gt m\oplus(\gt r\oplus\gt h)$, where $\gt r\oplus\gt h=\gt q_\alpha$.
Here $\hat\gamma(\gt q_\alpha,\gt h)=0$ and $\hat\gamma$ is non-degenerate on 
$\gt m\times(\gt h/\gt h_\gamma)$. 
The block structure of $\hat\gamma$ shows that 
$$
\rk\hat\gamma = 2k+ \rk(\hat\gamma|_{\gt r\times\gt r})= (\dim{\gt q}_\alpha-\ind{\gt q}_\alpha)+2k.
$$  
Hence $\dim\gt q-\ind\gt q-2k=\dim\gt q_\alpha-\ind\gt q_\alpha=\dim_{\mF}\hat{\gt q}-\ind\hat{\gt q}$ and 
$$\bb(\hat{\gt q})=\dim_{\mF}\hat{\gt q}-\frac{1}{2}(\dim_{\mF}\hat{\gt q}-\ind\hat{\gt q}) =\frac{1}{2}(\dim\gt q+ \ind\gt q)-\dim\gt h+1.$$ 
This completes the proof.  
\end{proof}


We will need another auxiliary statement. 
Suppose that $z\in\gt q$ is a non-zero central element.  
Let $\ca\in\U(\gt q)$ be a commutative subalgebra. 
For $c\in\mK$, let $\ca(c)$ be the image of $\ca$ in $\U(\gt q)/(z-c)$. 
Then the following assertion is true. 

\begin{lm} \label{delta}
There is a non-zero $c\in\mK$ such that $\trdeg\ca(c)\ge \trdeg\ca -1$.
\end{lm}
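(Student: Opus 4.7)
The plan is to reduce the question to commutative dimension theory for a family over $\mathbb{A}^1$. First I would replace $\ca$ by a finitely generated commutative subalgebra of the same transcendence degree $d := \trdeg\,\ca$; this is harmless because such a subalgebra always exists, and any lower bound $\trdeg\,\tilde\ca(c) \ge d - 1$ passes to $\ca(c) \supset \tilde\ca(c)$.

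Next I would set $\cb := \ca[z] \subset \U(\gt q)$. Since $z$ is central and $\ca$ commutative, $\cb$ is commutative; it is a finitely generated $\mK$-algebra and a domain (being a subring of the domain $\U(\gt q)$). Its transcendence degree is $d+1$ or $d$ according as $z$ is transcendental or algebraic over $\mathrm{Frac}(\ca)$. Because $z \in \gt q \setminus 0$ is transcendental over $\mK$ in $\U(\gt q)$, the inclusion $\mK[z] \hookrightarrow \cb$ induces a dominant morphism $\mathrm{Spec}\,\cb \to \mathbb{A}^1$, and the fibre-dimension theorem supplies a dense open $V \subset \mathbb{A}^1$ on which $\dim\,\cb/(z-c)\cb = \trdeg\,\cb - 1 \ge d-1$. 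Writing each element of $\cb$ as $\sum a_i z^i$ with $a_i \in \ca$ and reducing modulo $z-c$ identifies $\cb/(z-c)\cb$ with $\ca/(\ca \cap (z-c)\cb)$, so the fibre is naturally a quotient of $\ca$.

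It remains to compare $\cb/(z-c)\cb$ with $\ca(c) = \ca/(\ca \cap (z-c)\U(\gt q))$. The quotient $\U(\gt q)/(z-c)$ is a domain since its associated graded with respect to the inherited PBW filtration is the polynomial ring $\cS(\gt q/\mK z)$, so $\ca(c)$ is a commutative domain and the natural surjection $\cb/(z-c)\cb \twoheadrightarrow \ca(c)$ has prime kernel $(\ca \cap (z-c)\U(\gt q))/(\ca \cap (z-c)\cb)$. The main obstacle is showing that the height of this kernel is at most $\trdeg\,\cb - d \in \{0,1\}$ for a suitable $c \ne 0$. The essential input is that $\U(\gt q)$ is a free $\mK[z]$-module by PBW (choose a basis of $\gt q$ containing $z$), so for each fixed nonzero $f \in \U(\gt q)$ the set of $c \in \mK$ with $f \in (z-c)\U(\gt q)$ is finite. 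I would handle the apparent non-uniformity of the obstruction by replacing $\cb$ with its $\mK[z]$-pure closure $\tilde\cb := \{u \in \U(\gt q) : p(z)u \in \cb \text{ for some } 0 \ne p(z) \in \mK[z]\}$: this $\tilde\cb$ is a commutative domain in $\U(\gt q)$ with the same fraction field as $\cb$ (hence the same transcendence degree), satisfies $\tilde\cb \cap (z-c)\U(\gt q) = (z-c)\tilde\cb$ for every $c$, and so the image of $\tilde\cb$ in $\U(\gt q)/(z-c)$ is isomorphic to $\tilde\cb/(z-c)\tilde\cb$, which has transcendence degree $\ge d-1$ generically. The delicate final step, which is the main obstacle, is to check that for $c \ne 0$ avoiding a finite bad locus, this image coincides with $\ca(c)$, a matter of clearing denominators for sufficiently many elements; the subtlety is that $\tilde\cb$ is not known to be finitely generated over $\cb$, so one must argue that the relevant bad loci form a negligible union and pick $c$ from their complement in $V \setminus \{0\}$.
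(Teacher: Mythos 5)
Your approach is genuinely different from the paper's, and it runs into a real obstacle that you correctly flag but do not overcome. The paper localises $\U(\gt q)$ at the central element $z$, filters $\U(\gt q)_z = \U(\gt q)\otimes_{\mK[z]}\mK(z)$ by the finite-dimensional $\mK(z)$-subspaces $\U_d(\gt q)_z$, takes the graded image $\bar\ca\subset\cS(\gt q)_z$ of $\ca$, and invokes Borho--Kraft to get $\trdeg_{\mK(z)}\bar\ca = \trdeg_{\mK(z)}\ca \ge \trdeg\ca - 1$. The non-commutativity is thereby disposed of at the outset: the specialisation $z\mapsto c$ and the generic-fibre argument are run entirely inside $\cS(\gt q)$, where everything is under control, and a commutative filtration square compares the two specialisations. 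You instead stay in the associative algebra and pass to $\cb=\ca[z]$; this has the right flavour, but it leaves the comparison between the commutative quotient $\cb/(z-c)\cb$ and the actual image $\ca(c)=\cb/(\cb\cap(z-c)\U(\gt q))$ uncontrolled, which is the whole point.

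The step you call ``delicate'' is where the argument genuinely breaks, and in two places. Passing to the $\mK[z]$-pure closure $\tilde\cb$ does give the identity $\tilde\cb\cap(z-c)\U(\gt q)=(z-c)\tilde\cb$, so the image of $\tilde\cb$ in $\U(\gt q)/(z-c)$ is $\tilde\cb/(z-c)\tilde\cb$. But first, $\tilde\cb$ is not known to be finitely generated, so the fibre-dimension theorem does not apply to $\mathrm{Spec}\,\tilde\cb\to\mathbb{A}^1$ and you have no generic lower bound on $\trdeg\big(\tilde\cb/(z-c)\tilde\cb\big)$. Second, and more fundamentally, $\ca(c)$ is the image of the smaller algebra $\cb$, hence sits inside $\tilde\cb/(z-c)\tilde\cb$ as a possibly much smaller subalgebra; it is not true that these two coincide, and a lower bound on the larger algebra gives no lower bound on $\ca(c)$. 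What your route actually requires is an upper bound on the height of the prime $\cb\cap(z-c)\U(\gt q)\subset\cb$ for generic $c$, and producing such a bound inside $\U(\gt q)$ is exactly the difficulty that the paper circumvents by reducing to $\cS(\gt q)$ via the localised filtration and Borho--Kraft \emph{before} ever specialising $z$.
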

\begin{proof}
We consider $\ca$ as a subalgebra of $\U(\gt q)_z=\U(\gt q)\otimes_{\mK[z]}\mK(z)$. 
On $\U(\gt q)_z$, there is an increasing filtration by the finite-dimensional 
$\mK(z)$-vector spaces 
$\U_d(\gt q)_z=\left<\U_d(\gt q)\right>_{\mK(z)}$. The associated graded algebra
$\gr\!_z(\U(\gt q)_z)$ is isomorphic to $\cS(\gt q)_z=\cS(\gt q)\otimes_{\mK[z]}\mK(z)$. Let 
$\bar\ca\subset \cS(\gt q)_z$ be the graded image of $\ca$.
According to \cite[Satz~5.7]{bk-GK}, $\trdeg_{\mK(z)}\bar\ca=\trdeg_{\mK(z)}\ca$. Note that 
actually $\bar\ca\subset\cS(\gt q)$. 

The quotient $\U(\gt q)/(z-c)$ inherits the standard filtration from $\U(\gt q)$. Moreover,  Diagram~\ref{fig-z} is commutative. 
\begin{figure}[h] 
$\xymatrix@R+2mm@C+4mm{ 
\U(\gt q)  \ar[r] \ar[d]^{\gr\!_z}    & \U(\gt q)/(z-c)   \ar[d]^{\gr} \\ 
\cS(\gt q) \ar[r] &  \cS(\gt q)/(z-c)  \\ } 
$
\caption{Filtration of the localised algebra and quotients.} \label{fig-z}
\end{figure}  
Let $\bar\ca(c)$ be the image of $\bar\ca$ in $\cS(\gt q)/(z-c)$. 
One of the basic facts in algebraic geometry states that there is a non-zero $c\in\mK$ such that 
$\trdeg\bar\ca(c)=\trdeg_{\mK(z)}\bar\ca$. 
Hence, for this $c$, we have  
$$
\trdeg\ca(c)\ge \trdeg\bar\ca(c)=\trdeg_{\mK(z)}\bar\ca = \trdeg_{\mK(z)}\ca \ge \trdeg\ca -1
$$
as desired. 
\end{proof}

\subsection{Invariants of a Heisenberg algebra} \label{sec-i2}

Recall that  a $(2n{+}1)$-dimensional Heisenberg Lie algebra over $\mK$
is a Lie algebra $\gt h$ with  a basis
$\{x_1,\ldots,x_n,y_1,\ldots,y_n,z\}$  such that $n\ge 1$,
$[x_i,x_j]=[y_i,y_j]=0$, $[\gt h,z]=0$, and $[x_i,y_j]=\delta_{ij}z$.
Suppose that $\gt q=\gt l\ltimes \gt h$, where $\gt l$ is a subalgebra and $[\gt q, z]=0$. 
Assume further that the subspace $\gt v=\left<x_j,y_j\mid 1\le j\le n\right>_{\mK}$ is $\gt l$-stable. 
According to \cite[Lemma~18]{int} and its corollary,
\begin{equation} \label{eq-hl}
(\cS(\gt q)[z^{-1}])^\gt h \cong \cS(\gt l)\otimes_{\mK} \mK[z,z^{-1}].  
\end{equation}
This isomorphism can be made very explicit. For $\xi\in \gt l$, set 
$$
\hat\xi=\xi +  \frac{1}{2 z}\sum\limits_{i=1}^n ([\xi,x_i] y_i - [\xi,y_i] x_i) \in \U(\gt q)[z^{-1}]. 
$$
The following statement is elementary in nature and is certainly known. Similar ideas have been used in 
\cite[Sect.~4.8]{ppy}. 

\begin{lm}\label{v-com}
We have $[v,\hat\xi]=0$  for all $\xi\in\gt l$ and all $v\in\gt h$.
\end{lm}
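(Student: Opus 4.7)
The plan is to verify $[v,\hat\xi]=0$ by reducing to generators of $\gt h$ and carrying out a direct Leibniz computation, with the Jacobi identity supplying the final cancellation. Since $z$ is central in $\gt q$ and $\gt h$ is generated as a Lie algebra by $z$ together with $\{x_k,y_k\mid 1\le k\le n\}$, it suffices to show $[x_k,\hat\xi]=0$ and $[y_k,\hat\xi]=0$ for every $k$. The two cases are symmetric, so I would only write out the one for $x_k$. Because $\gt v$ is $\gt l$-stable, one may expand $[\xi,x_i]=\sum_j(a_{ij}x_j+b_{ij}y_j)$ and $[\xi,y_i]=\sum_j(c_{ij}x_j+d_{ij}y_j)$ with scalar coefficients in $\mK$.

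The second step is a Leibniz expansion of $[x_k,\hat\xi]$ inside $\U(\gt q)[z^{-1}]$, using that $z$ is central and that $[x_k,y_j]=\delta_{kj}z$ is the only non-vanishing bracket between $x_k$ and a generator of $\gt v$. Each of $[x_k,[\xi,x_i]y_i]$ and $[x_k,[\xi,y_i]x_i]$ produces a scalar multiple of $z$ times an element of $\gt v$, so the $1/(2z)$ prefactor eliminates the $z$ cleanly, and I expect the sum to collapse to
\[
[x_k,\hat\xi]=\tfrac12\Bigl(-[\xi,x_k]+\sum_i b_{ik}y_i-\sum_i d_{ik}x_i\Bigr).
\]

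The last step uses that $\xi$ acts on $\gt h$ as a derivation of the Heisenberg bracket. Applying Jacobi to $[\xi,[x_k,y_j]]=0$, which holds because $[x_k,y_j]=\delta_{kj}z$ is central in $\gt q$, yields $a_{kj}+d_{jk}=0$; applying it to $[\xi,[x_k,x_j]]=0$ yields the symmetry $b_{kj}=b_{jk}$. Substituting these identities into the displayed expression annihilates every summand and proves $[x_k,\hat\xi]=0$. The computation for $[y_k,\hat\xi]=0$ is entirely analogous, with the roles of $x$'s and $y$'s and the appropriate signs swapped. The only real obstacle is bookkeeping: tracking signs and the order of non-commuting factors under the Leibniz rule in the associative algebra $\U(\gt q)[z^{-1}]$; conceptually the result expresses nothing more than the fact that $\hat\xi-\xi$ is the quadratic element of the Weyl algebra on $\gt v$ that, in the standard Heisenberg realisation, implements the $\gt l$-action on $\gt v$.
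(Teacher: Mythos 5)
Your proposal is correct and follows essentially the same route as the paper: reduce to the generators $x_k, y_k$, apply the Leibniz rule in $\U(\gt q)[z^{-1}]$, and invoke the Jacobi identity (together with $z$ being central) to make the surviving terms cancel. The only stylistic difference is that you introduce explicit matrix coefficients $a_{ij}, b_{ij}, c_{ij}, d_{ij}$ for the $\gt l$-action on $\gt v$ and cancel them termwise, whereas the paper stays coordinate-free, rewriting $[x_j,[\xi,x_i]]$ as $[[x_j,\xi],x_i]$ via Jacobi and then recognising $\tfrac{1}{2z}\sum_i([w,x_i]y_i-[w,y_i]x_i)=-\tfrac12 w$ for $w\in\gt v$; both are the same computation in different notation.
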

\begin{proof}
It is enough to show that  $[x_j,\hat\xi]=[y_j,\hat\xi]=0$  for all $j$ such that $1\le j\le n$. 
We have 
$$
\begin{array}{l}
[x_j ,\hat\xi]=  [x_j, \xi]+ \frac{1}{2z}\left((\sum\limits_{i=1}^n  [x_j,[\xi,x_i]] y_i) + [\xi,x_j]z  - 
  \sum\limits_{i=1}^n  [x_j,[\xi,y_i]] x_i  \right) = \\ 
  \qquad = [x_j, \xi] + \frac{1}{2} [\xi,x_j] +\frac{1}{2z} \left( \sum\limits_{i=1}^n  [[x_j,\xi],x_i] y_i - 
  \sum\limits_{i=1}^n  [[x_j,\xi],y_i] x_i \right)  =  [x_j, \xi]  + [\xi,x_j] =0;  \\ 
{} [y_j,\hat\xi]  = [y_j,\xi]+\frac{1}{2z}\left((\sum\limits_{i=1}^n  [y_j,[\xi,x_i]] y_i) + [\xi,y_j]z  - 
  \sum\limits_{i=1}^n  [y_j,[\xi,y_i]] x_i  \right) =  \\ 
  \qquad = [y_j,\xi] + \frac{1}{2} [\xi,y_j] +\frac{1}{2z} \left( \sum\limits_{i=1}^n  [[y_j,\xi],x_i] y_i - 
  \sum\limits_{i=1}^n  [[y_j,\xi],y_i] x_i \right)  = [y_j,\xi] +[\xi,y_j] =0.  
\end{array}
$$
This completes the proof. 
\end{proof}

It follows from \cite[Lemma~18]{int} that 
$(\cS(\gt q)[z^{-1}])^\gt h$ is generated by the symbols 
$\gr\!(z\hat\xi)$ of the elements $z\hat\xi$ with $\xi\in\gt l$ and by $z,z^{-1}$.  
The same lemma states that \eqref{eq-hl} is a natural isomorphism of 
Poisson algebras. For $\xi,\eta\in\gt l$ and $\zeta=[\xi,\eta]$, 
we  have therefore
\begin{equation} \label{eq-hl-p}
\{\gr\!(z\hat\xi),\gr\!(z\hat\eta)\} = z \gr\!(z\hat\zeta),
\end{equation}
where the Poisson bracket is taken in $\cS(\gt q)$. The commutator 
$[\hat\xi,\hat\eta]$ 
belongs to $(\U(\gt q)[z^{-1}])^{\gt h}$. 

\begin{lm} \label{lm-hat}
In the above notation, we have $[\hat\xi,\hat\eta]=\hat\zeta$.
\end{lm}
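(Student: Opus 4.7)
The plan is a direct calculation. I write $\hat\xi = \xi + T_\xi$ with $T_\xi := H_\xi/(2z)$ and $H_\xi := \sum_i([\xi,x_i]y_i - [\xi,y_i]x_i)$, so that $\hat\zeta = \zeta + T_\zeta$, and centrality of $z$ gives the bilinear expansion $[\hat\xi,\hat\eta] = \zeta + [\xi,T_\eta] + [T_\xi,\eta] + [T_\xi,T_\eta]$. The claim is thus equivalent to showing that these last three commutators sum to $T_\zeta$.

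The key intermediate step I would establish is the $\gt l$-equivariance $[\xi,H_\eta] = H_{[\xi,\eta]} = H_\zeta$. Expanding via the Leibniz rule and applying Jacobi in the form $[\xi,[\eta,w]] = [\zeta,w] + [\eta,[\xi,w]]$ for $w = x_i, y_i$ yields $[\xi, H_\eta] = H_\zeta + \Sigma$, where the error $\Sigma$ is quadratic in $\gt v$. Introducing the operators $A_\xi := \ad\xi|_{\gt v},\ A_\eta := \ad\eta|_{\gt v}$, which lie in $\gt{sp}(\gt v)$ because $[x_i,y_j]=\delta_{ij}z$ is $\ad\gt l$-invariant, one recognises $\Sigma = H_{A_\eta A_\xi} + \sum_j(A_\eta(x_j)A_\xi(y_j) - A_\eta(y_j)A_\xi(x_j))$. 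A Darboux-basis computation that uses the symplectic matrix conditions shows the second sum equals $-H_{A_\eta A_\xi}$, so $\Sigma = 0$. Morally this is the fact that $A \mapsto -H_A/(2z)$ is the quantum moment map $\gt{sp}(\gt v) \to \U(\gt h)[z^{-1}]$.

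Granted equivariance, Lemma~\ref{v-com} applied to $[\hat\xi,v] = 0$ gives $[H_\xi, v] = -2z[\xi,v]$ for $v \in \gt v$, and the Leibniz rule propagates this to $[H_\xi, H_\eta] = -2z[\xi, H_\eta] = -2z H_\zeta$. Substituting yields $[\xi, T_\eta] = [\xi,H_\eta]/(2z) = T_\zeta$, $[T_\xi,\eta] = -[\eta,H_\xi]/(2z) = T_\zeta$ (using $H_{[\eta,\xi]} = -H_\zeta$), and $[T_\xi,T_\eta] = [H_\xi,H_\eta]/(4z^2) = -T_\zeta$. Adding gives $T_\zeta$, hence $[\hat\xi,\hat\eta] = \zeta + T_\zeta = \hat\zeta$.

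The main obstacle will be the combinatorial identity $\Sigma = 0$, i.e.\ the purely algebraic statement $\sum_j(A(x_j)B(y_j) - A(y_j)B(x_j)) = -H_{AB}$ in $\U(\gt h)[z^{-1}]$ for symplectic $A, B$; this is a bookkeeping exercise with the Heisenberg relations and the symplecticity constraints $\alpha_{ij} = -\delta_{ji}$ and $\beta_{ij}=\beta_{ji}$, $\gamma_{ij}=\gamma_{ji}$ on matrix entries. An alternative strategy is to bypass the direct check: by Lemma~\ref{v-com} and Jacobi, $[\hat\xi,\hat\eta] - \hat\zeta$ already lies in $\U(\gt q)[z^{-1}]^{\gt h}$, and the classical identity \eqref{eq-hl-p} forces its principal symbol to vanish; an induction on filtration degree against the structure of $\U(\gt q)[z^{-1}]^{\gt h}$ then pins the element down to zero.
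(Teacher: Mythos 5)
Your reduction to the single identity $[\xi,H_\eta]=H_\zeta$ (equivalently $[\xi,T(\eta)]=T(\zeta)$) is exactly the right pivot, and your bookkeeping — deriving $[T_\xi,\eta]=T_\zeta$ and $[T_\xi,T_\eta]=-T_\zeta$ from Lemma~\ref{v-com} plus the equivariance, then adding — is correct. The paper organises this slightly more economically: since $T(\eta)\in\U(\gt h)[z^{-1}]$ and $\hat\xi$ commutes with $\gt h$, one has $[\hat\xi,T(\eta)]=0$ and $[T(\xi),\hat\eta]=0$ outright, which collapses the three commutators to the single term $[\xi,T(\eta)]$ in one line; you recover the same thing but by computing each piece.

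Where the two arguments genuinely diverge is in proving the equivariance itself. You attack $[\xi,H_\eta]=H_\zeta$ head-on in $\U(\gt h)[z^{-1}]$, which, after Jacobi, leaves a quadratic error $\Sigma$ that you propose to kill by a Darboux-basis computation with the symplectic constraints and the Heisenberg relations. That identity is true (I checked it in low rank, and it is exactly the statement that $A\mapsto -H_A/(2z)$ is a quantum comoment map for $\gt{sp}(\gt v)$), but you don't actually carry it out, and the noncommutative corrections from $[x_i,y_j]=\delta_{ij}z$ make it fiddlier than the phrase ``bookkeeping exercise'' suggests — this is the real content you would need to supply. The paper sidesteps that computation entirely: it first observes that $\{\xi,\overline T(\eta)\}-\overline T(\zeta)$ is an $\gt h$-invariant lying in $\cS^2(\gt v)$, and that $\cS^2(\gt v)^{\gt h}=0$, so the classical equivariance is immediate; it then lifts to $\U$ using the identity $zT(u)=\Sym(\overline T(u))$ (which holds because $[[u,x_i],y_i]-[[u,y_i],x_i]=0$) together with $\gt q$-equivariance of the symmetrisation map. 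That route replaces your symplectic matrix identity with a one-line vanishing argument at the Poisson level. Your second, ``alternative'' strategy — deducing that $[\hat\xi,\hat\eta]-\hat\zeta$ is $\gt h$-invariant with vanishing symbol and then pinning it to zero — is closer in spirit to the paper's symbol-plus-lift idea, but as sketched it leans on the structure of $(\U(\gt q)[z^{-1}])^{\gt h}$, which is essentially what Corollary~\ref{cl-heis} (a consequence of this very lemma) establishes; to avoid circularity you would have to pass through the classical invariants of \cite[Lemma~18]{int} and the symmetrisation map, at which point you are reproducing the paper's argument.
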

\begin{proof}
Write $\hat u=u+T(u)$ for $u\in\gt l$. Then $T(u)\in\U(\gt h)[z^{-1}]$ and 
hence $[\hat v,T(u)]=0$ for all $v\in\gt l$. Now 
$[\hat\xi,\hat\eta]=\zeta+[\xi,T(\eta)]$. 
Next we consider the elements   
$$
\overline{T}(u)=\frac{1}{2}\sum\limits_{i=1}^n ([u,x_i] y_i - [u,y_i] x_i))\in \cS^2(\gt v).
$$ 
By the construction, $\gr\!(z\hat u)=zu+\overline{T}(u)$. 
Since $[\gt l,\gt h]\subset \gt h$,   each $\{\xi,\gr\!(z\hat u)\}$ is  again an $\gt h$-invariant. 
In particular, 
$$
\{\xi, \gr\!(z\hat \eta)\} - \gr\!(z\zeta)=\{\xi,\overline{T}(\eta)\}-\overline{T}(\zeta)\in  \cS^2(\gt v)^{\gt h}.
$$ 
Because $\cS^2(\gt v)^{\gt h}=0$, this difference is zero.  

Observe that 
$$
[[u,x_i],y_i]+[x_i,[u,y_i]]=[u,z]=0
$$
for each $i$ and hence $[[u,x_i],y_i]-[[u,y_i],x_i]=0$. 
This implies that $zT(u)=\Sym(\overline{T}(u))$.  Thereby
$[\xi,zT(\eta)]$ equals the symmetrisation of  $\{\xi,\overline{T}(\eta)\}=\overline{T}(\zeta)$.  
Thus $[\xi,T(\eta)]=T(\zeta)$ and we are done. 
\end{proof}

\begin{cl} \label{cl-heis}
We have $(\U(\gt q)[z^{-1}])^{\gt h}\cong \U(\gt l)\otimes_{\mK}\mK[z,z^{-1}]$. \qed 
\end{cl}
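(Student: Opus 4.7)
The plan is to exhibit the isomorphism explicitly by sending $\xi\in\gt l$ to $\hat\xi$ and $z^{\pm 1}$ to $z^{\pm 1}$, and then promote the Poisson-algebra isomorphism \eqref{eq-hl} to the enveloping-algebra level by a filtered/graded argument.

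First I would set up the map. Since $z$ is central in $\U(\gt q)$, both $z$ and $z^{-1}$ commute with $\gt h$, so in particular $(\U(\gt q)[z^{-1}])^{\gt h} = \U(\gt q)^{\gt h}[z^{-1}]$. By Lemma~\ref{v-com} each $\hat\xi$ lies in this centraliser, and by Lemma~\ref{lm-hat} the assignment $\xi\mapsto\hat\xi$ intertwines the Lie brackets on $\gt l$ and on $(\U(\gt q)[z^{-1}])^{\gt h}$. The universal property of $\U(\gt l)$, together with the inclusion $\mK[z,z^{-1}]\hookrightarrow Z\bigl(\U(\gt q)[z^{-1}]\bigr)$, therefore yields a unique algebra homomorphism
\begin{equation*}
\Phi:\U(\gt l)\otimes_{\mK}\mK[z,z^{-1}]\longrightarrow (\U(\gt q)[z^{-1}])^{\gt h},\qquad \xi\otimes 1\mapsto\hat\xi,\ \ 1\otimes z^{\pm 1}\mapsto z^{\pm 1}.
\end{equation*}

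To show $\Phi$ is bijective I would pass to the associated graded with respect to the standard PBW filtration, extended to $\U(\gt q)[z^{-1}]$ as the exhaustive $\mK[z,z^{-1}]$-module filtration $\mathcal F_d=\U_d(\gt q)\cdot\mK[z,z^{-1}]$. The symmetrisation map $\Sym:\cS(\gt q)\to\U(\gt q)$ is $\gt h$-equivariant and strictly filtration-preserving, so it restricts to a strict filtered bijection between $\cS(\gt q)^{\gt h}$ and $\U(\gt q)^{\gt h}$, and after inverting $z$ it identifies
\begin{equation*}
\gr\bigl((\U(\gt q)[z^{-1}])^{\gt h}\bigr)=(\cS(\gt q)[z^{-1}])^{\gt h}.
\end{equation*}
The principal symbol of $\hat\xi$ is, by construction, the element $z^{-1}\gr(z\hat\xi)$ already considered in \eqref{eq-hl-p} (this is what is used implicitly in the proof of Lemma~\ref{lm-hat} through the identity $zT(u)=\Sym(\overline{T}(u))$), and under the isomorphism \eqref{eq-hl} of \cite[Lemma~18]{int} this is exactly the element corresponding to $\xi\in\gt l$. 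Hence $\gr\Phi$ coincides with the Poisson isomorphism $\cS(\gt l)\otimes\mK[z,z^{-1}]\cong(\cS(\gt q)[z^{-1}])^{\gt h}$ of that lemma, and the standard filtered/graded lifting principle then forces $\Phi$ itself to be an isomorphism of associative $\mK$-algebras.

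The only slightly delicate point, and the main obstacle, is the filtration bookkeeping once $z^{-1}$ is inverted: one must check that $\mathcal F_{-1}=0$, that $\mathcal F$ is exhaustive, and that $\gr_{\mathcal F}\U(\gt q)[z^{-1}]$ matches $\cS(\gt q)\otimes_{\mK}\mK[z,z^{-1}]$ as a Poisson algebra, so that the Poisson isomorphism from \eqref{eq-hl} can legitimately be identified with $\gr\Phi$. Once this compatibility is verified, the corollary is an immediate consequence of Lemmas~\ref{v-com} and \ref{lm-hat} together with \cite[Lemma~18]{int}.
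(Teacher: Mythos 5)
Your argument is correct and follows the same route as the paper: exhibit the Lie-algebra embedding $\xi\mapsto\hat\xi$ using Lemmas~\ref{v-com} and \ref{lm-hat}, then pass to the associated graded and invoke the isomorphism \eqref{eq-hl} from \cite[Lemma~18]{int} to conclude that the resulting algebra map is bijective. The filtration bookkeeping you flag (that $\gr_{\mathcal F}\U(\gt q)[z^{-1}]\cong\cS(\gt q)[z^{-1}]$ once one notes both are $\cS(\gt q/\mK z)\otimes\mK[z,z^{-1}]$, and that $\Sym$ restricts to a strict filtered bijection on $\gt h$-invariants) is a genuine but routine check, and the paper leaves it implicit in exactly the same way.
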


The isomorphism~\eqref{eq-hl} implies that $\ind\gt q=\ind\gt l+1$. 
Hence
\begin{equation} \label{eq-bb-h}
\bb(\gt q)= \bb(\gt l)+n +1. 
\end{equation}

\section{On algebraic extensions} \label{sec-ext}

Let $A=\bigcup\limits_{n\ge 0} A_n$ be an increasingly  filtered associative algebra such that $\dim A_n<\infty$ for each $n\ge 0$. 
Assume that $A_{-m}=0$ for all $m\ge 1$.
Suppose that 
the 
associated graded algebra $\overline{A}=\gr A$ is a commutative domain
and a finitely generated $\mK$-algebra.   
For each $a\in A_n\setminus A_{n-1}$, set $\bar a=\gr\!(a)=a+A_{n-1}$. 
For a subspace $V\subset A$, let $\overline{V}=\gr\!(V)$ be the subspace of 
$\overline{A}$ spanned  by  $\gr\!(v)$ with $v\in V$. 

Let $B$ be a subalgebra of $A$. 
Then \cite[Satz~5.7]{bk-GK} asserts that 
the Gelfand--Kirillov dimensions of $B$ and $\overline{B}$ are equal.  
This result implies that for commutative subalgebras $B\subset C\subset A$, 
 we have 
\begin{equation} \label{eq-A}
C \ \text{ is algebraic over } \ B \ \Longleftrightarrow \ 
  \gr\!(C) \ \text{ is algebraic over } \ \gr\!(B).
\end{equation}

\begin{lm}[{cf. \cite[Lemma~1]{r:wc}}] \label{lm-closure}
Keep the above assumptions on $A$ and  let $B\subset C$ be commutative subalgebras
of $A$ such that  $C$ is algebraic over $B$. 
If $[x,B]=0$ for some $x\in A$, then also  $[x,C]=0$. 
\end{lm}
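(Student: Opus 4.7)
I would fix $c\in C$ and prove $y:=[x,c]=0$ by induction on the degree $n$ of a nonzero polynomial $p(T) = \sum_{i=0}^n b_i T^i\in B[T]$ with $p(c)=0$ and $b_n\neq 0$, chosen to be of minimal such degree. A preliminary Jacobi computation gives $[y,B]=0$: for $b\in B$, the identity $[b,[x,c]]+[x,[c,b]]+[c,[b,x]]=0$ collapses since $[x,b]=[c,b]=0$.

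For the base case $n=1$, the relation $b_1 c + b_0 = 0$ together with $[x,B]=0$ yields $b_1 y = 0$ in $A$. If $y\neq 0$, then both $\bar b_1$ and $\bar y$ are nonzero in $\overline{A}$, so $\bar b_1\cdot \bar y=\overline{b_1 y}=0$ contradicts the fact that $\overline{A}$ is a domain.

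For the inductive step, apply $\ad(x)$ to $p(c) = 0$; using $[x,b_i]=0$ this produces
\[
0 \;=\; \sum_{i=1}^n b_i [x,c^i] \;=\; \sum_{i=1}^n b_i \sum_{j=0}^{i-1} c^j\, y\, c^{i-1-j}.
\]
Since $\overline{A}$ is commutative, each $c^j y c^{i-1-j}$ equals $c^{i-1} y$ modulo strictly lower filtration; taking top symbols of the displayed identity therefore yields an equation $\bar y \cdot \sigma = 0$ in $\overline{A}$, where $\sigma$ is the top symbol of $\sum_{i\in S^+} i\, b_i\, c^{i-1}$ and $S^+\subseteq\{1,\dots,n\}$ consists of the indices realising the maximal filtration degree of $b_i c^i$. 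Because $\overline{A}$ is a domain and $\bar y\neq 0$ (by contradiction), we obtain $\sigma=0$, so the element $\sum_{i\in S^+} i b_i c^{i-1}$ has strictly smaller filtration degree than naively expected. Peeling off this top part, and using $\ch\mK=0$ to keep the integer factors $i$ nonzero, one extracts a new nonzero polynomial $q\in B[T]$ of degree $<n$ with $q(c)=0$, contradicting minimality of $n$. Hence $y=0$.

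The main obstacle is the last step: the graded equation $\sigma=0$ controls only the leading filtration component of $p'(c) = \sum i b_i c^{i-1}$, so producing an honest degree-reducing relation for $c$ over $B$ requires iterating the ``strip off the top symbol'' procedure, at each stage invoking the domain property of $\overline{A}$ to cancel $\bar y$ and the commutativity of $\overline{A}$ to move $y$ past powers of $c$ modulo lower order. The observation $[y,B]=0$ from the first paragraph is what permits this iteration to be carried out cleanly within the centraliser $Z_A(B)$.
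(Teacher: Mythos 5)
Your computation up to the identity $\bigl(\sum_{i\in S^+} i\,\bar b_i\,\bar c^{\,i-1}\bigr)\bar y=0$ is correct, and the domain property then forces $\sum_{i\in S^+} i\,\bar b_i\,\bar c^{\,i-1}=0$ in $\overline{A}$. But this is where the proof stops, and the gap you flag at the end is genuine, not a technicality. The relation you obtain lives in the graded algebra $\overline{A}$, between the symbols $\bar b_i$ and $\bar c$, whereas your induction is set up in terms of the degree of a minimal polynomial satisfied by $c$ over $B$ inside $A$. These two quantities are not the same and cannot be compared directly: a homogeneous relation $\sum_{i\in S^+} i\,\bar b_i\,\bar c^{\,i-1}=0$ in $\overline{A}$ only tells you that the element $\sum_{i\in S^+} i\,b_i\,c^{\,i-1}\in A$ drops into lower filtration; it does not give you a polynomial of degree $<n$ that vanishes on $c$. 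The proposed ``iterate the stripping of the top symbol'' does not terminate in a useful way, because at the next level the symbols of the various $i\,b_i\,c^{\,i-1}$ no longer sit in a single common degree, so there is no single new homogeneous relation to produce, and you have no control on the degree of the remainders. Also, the observation $[y,B]=0$ is true but does not feed into the argument: $y$ need not lie in $C$, need not be algebraic over $B$, and is not subject to the induction.

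The paper circumvents exactly this mismatch by moving entirely to the graded level \emph{before} choosing a minimal relation. It first introduces the integer $k\ge 1$ measuring the uniform drop of filtration degree of $[x,\,\cdot\,]$ on all of $C$ (not on the single element $c$), which makes the map $\{\bar x,\,\cdot\,\}_k:\overline{C}\to\overline{A}$ a well-defined $\overline{B}$-linear derivation. Then it picks $u\in C$ with $\{\bar x,\bar u\}_k\ne 0$ and a minimal \emph{homogeneous} polynomial relation $\boldsymbol{Q}(\bar u)=0$ with coefficients in $\overline{B}$, so that the Leibniz identity $\{\bar x,\boldsymbol{Q}(\bar u)\}_k=\{\bar x,\bar u\}_k\cdot\boldsymbol{Q}'(\bar u)=0$ together with the domain property of $\overline{A}$ gives $\boldsymbol{Q}'(\bar u)=0$ --- a strictly shorter homogeneous relation in $\overline{A}$, contradicting minimality in $\overline{A}$ itself. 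Notice that the element used is not your fixed $c$: your $c$ might well have $\{\bar x,\bar c\}_k=0$ if the degree drop of $[x,c]$ exceeds the minimal drop $k$, in which case your symbol $\bar y$ lives at the wrong filtration level. To repair your proposal you would need to (a) replace ``minimal polynomial of $c$ over $B$ in $A$'' by ``minimal homogeneous relation of $\bar u$ over $\overline{B}$ in $\overline{A}$'' for a suitably chosen $u$, and (b) make the derivation well-defined on all of $\overline{C}$ by introducing the uniform drop $k$ --- at which point you have reconstructed the paper's argument.
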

\begin{proof}
Since $\overline{A}$ is commutative, we have $[A_n,A_m]\subset A_{n+m-1}$ for 
all $m,n\ge 0$. Assume that there is $x\in A_m\setminus A_{m-1}$ such that
$[x,B]=0$ and  $[x,C]\ne 0$.  
Let $k\ge 1$ be the minimal number  such that
\begin{equation} \label{eq-k}
[x,C\cap A_n]\subset A_{m+n-k} \ \text{ for all } \ n\ge 0,
\end{equation}
but 
$[x,C\cap A_n]\not\subset A_{m+n-k-1}$ for some $n$. 
For $\bar u\in \overline{A}_\ell$ with $u \in C$, set 
$$
 \{\bar x,\bar u\}_k =  [x,u] + A_{m+\ell-k-1}.
$$
If $\bar u=\gr\!(u')$ with $u'\in C$, then $u-u'\in (C\cap A_{\ell-1})$ and hence 
$[x,u-u']\in A_{m+\ell-k-1}$ because of \eqref{eq-k}. 
Thereby $\{\bar x,y\}_k$ is a well-defined element of $A_{m+\ell-k}/A_{m+\ell-k-1}$ for each 
$y\in (\overline{C}\cap \bar A_\ell)$. The linear map 
$\{\bar x,\,\}\!: \gr\!(C)\to \overline{A}$  
satisfies the Leibniz rule by the construction 
and 
$\{\bar x,\overline{B}\}=0$. 

There is $u\in(C\cap A_n)$ such that
$\{\bar x,\bar u\}_k\ne 0$. Since $u$ is algebraic over $B$, the symbol $\bar u$ is algebraic over $\overline{B}$. 
Let 
$$
\boldsymbol{Q}(\bar u)=\bar b_N \bar u^N + \ldots + \bar b_1 \bar u + \bar b_0=0
$$ 
with $b_j\in B$ 
be a non-trivial equation on $\bar u$ of the smallest possible degree.  
Since $\bar u$ is a homogeneous element of $\overline{A}$, we can assume that all 
summands $\bar b_j \bar u^j$ have one and the same degree in $\overline{A}$. 

Consider the  symbol of $[x,\widetilde{\boldsymbol{Q}}(u)]$, where $\,\widetilde{\boldsymbol{Q}}(X)=\sum b_j X^j$. 
This symbol  is equal to the product 
$$
\{\bar x,\bar u\}_k ( N  \bar b_N \bar u^{N-1}+\ldots + 2\bar b_2 \bar u+ \bar b_1)=\{\bar x,\boldsymbol{Q}(\bar u)\}_k=0. 
$$
Because $\overline{A}$ is a domain, we have obtained an equation on 
$\bar u$ of smaller than $N$ degree. This contradiction proves that $[x,C]=0$ whenever $[x,B]=0$.  
\end{proof}

\begin{cl} \label{cl-cl}
Let $B\subset A$ be as in Lemma~\ref{lm-closure}. Then the algebraic closure of $B$
in the centraliser $Z_A(B)\subset A$ 
is a commutative subalgebra. \qed  
\end{cl}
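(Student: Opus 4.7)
The plan is to deduce the corollary almost directly from Lemma~\ref{lm-closure}. Write $C=\{y\in Z_A(B)\mid y\ \text{is algebraic over}\ B\}$; I need to check that $C$ is closed under addition and multiplication, and that any two elements of $C$ commute.

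The commutativity is the only substantive point, and it is handled as follows. Take $y_1,y_2\in C$. Since $B$ is commutative and $y_1$ commutes with every element of $B$, the subalgebra $B[y_1]\subset A$ generated by $B$ and $y_1$ is commutative. Moreover, $B[y_1]$ is algebraic over $B$ because $y_1$ is. Now $y_2\in Z_A(B)$, so $[y_2,B]=0$. Applying Lemma~\ref{lm-closure} with the pair $B\subset B[y_1]$ in the r\^oles of $B\subset C$ and with $x=y_2$, I conclude $[y_2,B[y_1]]=0$, and in particular $[y_1,y_2]=0$.

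Once commutativity is established, the subalgebra property is routine. Given $y_1,y_2\in C$, the algebra $B[y_1,y_2]\subset A$ is commutative by the previous paragraph, and it is algebraic over $B$ by the standard fact that a commutative extension generated by finitely many algebraic elements is algebraic. Hence every element of $B[y_1,y_2]$ is algebraic over $B$; in particular $y_1+y_2$ and $y_1y_2$ are algebraic over $B$. They also lie in $Z_A(B)$, since $Z_A(B)$ is itself a subalgebra of $A$. Thus $y_1+y_2,y_1y_2\in C$, and $C$ is a commutative subalgebra of $A$.

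I do not anticipate any real obstacle here: Lemma~\ref{lm-closure} is the decisive input, and everything else is formal manipulation with commutative subrings. The only mild care needed is to verify that $B[y_1]$ satisfies the hypotheses of the lemma, which follows immediately from commutativity of $B$ together with $y_1\in Z_A(B)$.
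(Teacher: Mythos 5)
Your proof is correct and is precisely the routine deduction the paper intends by marking the corollary with \qed: you apply Lemma~\ref{lm-closure} to the pair $B\subset B[y_1]$ with $x=y_2$ to get pairwise commutativity of algebraic elements, and then invoke the standard commutative-algebra fact (via passage to fraction fields, using that $A$, hence every commutative subalgebra of $A$, is a domain because $\gr A$ is) to see that sums and products of algebraic elements are again algebraic. No gaps.
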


\begin{rem}
A well-known fact is that the algebraic closure of a Poisson-commutative subalgebra is again Poisson-commutative. 
Lemma~\ref{lm-closure}, which is inspired by 
\cite[Lemma~1]{r:wc}, can be regarded as a non-commutative 
generalisation of this statement. 
\end{rem}

Our main example of $A$ is $\U(\gt q)$. Here $\gr A=\cS(\gt q)$ is a finitely  generated
commutative algebra, which is a domain.

\section{The inductive argument} \label{sec-ind}

Let $\gt n\lhd\gt q$ be the nilpotent radical of $\gt q$. Note that 
$\gt n$ is an algebraic Lie algebra. 

\begin{lm}[{ \cite[Lemma~4.6.2]{Dix}, cf. \cite[Lemma~17]{int} }]  \label{lm-h}
Suppose that each commutative non-zero characteristic
ideal of $\gt n$ is one-dimensional and $\gt n\ne 0$. Then either $\gt n=\mK$\/ or $\gt n$ is a Heisenberg Lie algebra. \qed 
\end{lm}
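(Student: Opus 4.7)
The plan is to extract everything from the centre and the lower central series of $\gt n$. First, $Z(\gt n)$ is a commutative characteristic ideal; since $\gt n$ is a nonzero nilpotent Lie algebra in characteristic zero, $Z(\gt n)\ne 0$, so by hypothesis $\dim Z(\gt n)=1$. Write $Z(\gt n)=\mK z$. If $\gt n$ is abelian, then $\gt n$ itself is a commutative characteristic ideal and the hypothesis gives $\gt n=\mK$, producing the first alternative. So assume from now on that $\gt n$ is non-abelian.

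The crucial step is to show that $\gt n$ has nilpotency class exactly $2$, i.e., $[\gt n,\gt n]\subset Z(\gt n)$. Let $\gamma_1=\gt n$ and $\gamma_{i+1}=[\gt n,\gamma_i]$ be the lower central series, and let $k\ge 2$ be the class, so $\gamma_k\ne 0$ and $\gamma_{k+1}=0$. Each $\gamma_i$ is a characteristic ideal of $\gt n$ (any derivation preserves the $\gamma_i$), and $\gamma_k\subset Z(\gt n)$ is commutative, so by hypothesis $\gamma_k=Z(\gt n)=\mK z$. Suppose for contradiction that $k\ge 3$. From the standard inclusion $[\gamma_i,\gamma_j]\subset\gamma_{i+j}$ we get
\[
[\gamma_{k-1},\gamma_{k-1}]\subset\gamma_{2k-2}=0,
\]
since $2k-2\ge k+1$. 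Thus $\gamma_{k-1}$ is a commutative characteristic ideal, and the hypothesis forces $\dim\gamma_{k-1}=1$. But $\gamma_{k-1}\supsetneq\gamma_k$ (otherwise $\gamma_{k+1}=[\gt n,\gamma_{k-1}]=[\gt n,\gamma_k]=0$ already, and chasing one step back gives $\gamma_k=[\gt n,\gamma_{k-1}]=\gamma_k$, which with $\gamma_k\ne 0$ contradicts nilpotency), contradicting $\dim\gamma_{k-1}=1$. Hence $k=2$.

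Now $[\gt n,\gt n]$ is a nonzero commutative characteristic ideal contained in $\mK z$, so $[\gt n,\gt n]=\mK z$. Define the alternating $\mK$-bilinear form $\omega$ on $\gt n/Z(\gt n)$ by $[x,y]=\omega(\bar x,\bar y)\,z$. If $\bar x$ lies in the radical of $\omega$, then $[x,\gt n]=0$, i.e., $x\in Z(\gt n)$, so $\bar x=0$; hence $\omega$ is non-degenerate. Therefore $\dim(\gt n/Z(\gt n))=2n$ is even and admits a symplectic basis $\bar x_1,\dots,\bar x_n,\bar y_1,\dots,\bar y_n$ with $\omega(\bar x_i,\bar y_j)=\delta_{ij}$ and $\omega(\bar x_i,\bar x_j)=\omega(\bar y_i,\bar y_j)=0$. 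Lifting this basis to $\gt n$ and adjoining $z$ yields a Heisenberg basis, giving the second alternative.

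The main obstacle is the class-two step; once we have it, the rest is bookkeeping. An alternative route via the upper central series is tempting but awkward, since $Z_2(\gt n)$ need not be commutative in general, whereas commutativity of $\gamma_{k-1}$ for $k\ge 3$ falls out of $[\gamma_i,\gamma_j]\subset\gamma_{i+j}$ for free.
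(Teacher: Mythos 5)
Your proof is correct and follows the same route one expects from the cited source (the paper omits the proof entirely, deferring to \cite[Lemma~4.6.2]{Dix}). The two alternatives fall out of $Z(\gt n)$ being a nonzero commutative characteristic ideal; the decisive step is the reduction to nilpotency class two, and your observation that for class $k\ge 3$ the term $\gamma_{k-1}$ is already commutative (via $[\gamma_i,\gamma_j]\subset\gamma_{i+j}$) and characteristic, hence one-dimensional, hence equal to $\gamma_k$, hence zero, is exactly the right mechanism. The symplectic-form argument on $\gt n/Z(\gt n)$ completing the Heisenberg presentation is routine and correctly handled. One tiny stylistic remark: the parenthetical justification of $\gamma_{k-1}\supsetneq\gamma_k$ is slightly garbled as written, but the intended content is sound — if $\gamma_{k-1}=\gamma_k$ then $\gamma_k=[\gt n,\gamma_{k-1}]=[\gt n,\gamma_k]=\gamma_{k+1}=0$, contradicting $\gamma_k\ne 0$.
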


\begin{rem}
It is a borderline issue, whether to consider $\mK$ as a Heisenberg Lie algebra. 
In \cite[Lemma~17]{int}, the convention is that $\mK$ is included into the class of Heisenberg algebras. 
Note that the results of \cite[Section~4]{int} are valid for $\mK$ as well by a  trivial reason. 
\end{rem}

An algebraic Lie algebra $\gt q$ has an algebraic Levi decomposition $\gt q=\gt l\ltimes\gt n$, where $\gt l$ is reductive. In the non-algebraic case, $\gt q=\gt s\ltimes\gt r$, where $\gt r$ is the solvable radical of 
$\gt q$ and $\gt s$ is semisimple. As is well-known, $[\gt r,\gt r]\subset \gt n$. 
Moreover, $\gt n\ne 0$ in the non-algebraic case, because otherwise $\gt q$ 
were reductive. 
The case  of a non-algebraic $\gt q$ is more involved and requires an additional lemma. 
 
\begin{lm} \label{non}
Let $\gt h=\gt v\oplus\gt z$ be a Heisenberg Lie algebra, where $\gt z=\mK z$ is the centre 
of $\gt n$ and 
$\dim\gt v\ge 2$.  
Suppose that $\gt h$ is an ideal of $\gt q$.  
Set $\widetilde{\gt l}=\{\xi\in\gt q \mid [\xi,\gt v]\subset \gt v\}$. Then $\gt q=\widetilde{\gt l}+\gt h$ and 
$\widetilde{\gt l}\cap\gt h=\gt z$.  
\end{lm}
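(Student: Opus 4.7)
The plan is to exploit the Heisenberg structure of $\gt h$: the bracket $[\cdot,\cdot]\!:\gt v\times\gt v\to\gt z=\mK z$ is a non-degenerate symplectic form, call it $\omega$, so $[u,v]=\omega(u,v)z$ for $u,v\in\gt v$. Both assertions will follow by reducing them to statements about $\omega$.

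First I would handle the intersection $\widetilde{\gt l}\cap\gt h$. Write an arbitrary element of $\gt h$ as $\xi=v_0+cz$ with $v_0\in\gt v$ and $c\in\mK$. Since $[z,\gt v]=0$ and $[\gt v,\gt v]\subset\gt z$, one computes $[\xi,\gt v]=[v_0,\gt v]\subset\gt z$. The condition $\xi\in\widetilde{\gt l}$ forces $[\xi,\gt v]\subset\gt v\cap\gt z=\{0\}$, i.e.\ $\omega(v_0,v)=0$ for every $v\in\gt v$. Non-degeneracy of $\omega$ yields $v_0=0$, whence $\xi\in\gt z$. The reverse inclusion $\gt z\subset\widetilde{\gt l}$ is trivial because $z$ is central in $\gt q$ (recall $[\gt q,z]=0$).

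Next I would show $\gt q=\widetilde{\gt l}+\gt h$. For $\xi\in\gt q$, the fact that $\gt h$ is an ideal gives $[\xi,v]\in\gt h=\gt v\oplus\gt z$ for every $v\in\gt v$. Let $\pi_\gt z\!:\gt h\to\gt z$ be the projection along $\gt v$ and define a linear functional $\phi_\xi\in\gt v^{*}$ by
\[
\pi_\gt z\bigl([\xi,v]\bigr)=\phi_\xi(v)\,z.
\]
Non-degeneracy of $\omega$ provides a unique $u\in\gt v$ with $\omega(u,v)=-\phi_\xi(v)$ for all $v\in\gt v$. Then $[u,v]=-\phi_\xi(v)z$ in $\gt h$, so
\[
[\xi+u,v]=[\xi,v]+[u,v]=\bigl([\xi,v]-\phi_\xi(v)z\bigr)\in\gt v
\]
for every $v\in\gt v$. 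Hence $\xi+u\in\widetilde{\gt l}$, and $\xi=(\xi+u)-u\in\widetilde{\gt l}+\gt h$.

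I do not anticipate any serious obstacle; the whole argument rests on two elementary observations about a Heisenberg algebra, namely that $[\gt v,\gt v]\subset\gt z$ and that the induced form $\omega$ on $\gt v$ is symplectic. The only mild care is that $\widetilde{\gt l}$ is defined as a set, not a priori as a subalgebra; this is not needed for the statement, but one may note in passing that the Jacobi identity makes $\widetilde{\gt l}$ a Lie subalgebra since $[[\xi,\eta],v]=[\xi,[\eta,v]]-[\eta,[\xi,v]]\in\gt v$ for $\xi,\eta\in\widetilde{\gt l}$.
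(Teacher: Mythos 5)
Your argument is correct and essentially reproduces the paper's proof: both obtain $\gt q=\widetilde{\gt l}+\gt h$ by using non-degeneracy of the symplectic form on $\gt v$ to find $u\in\gt v$ whose $\ad$-action matches the $\gt z$-component of $\ad(\xi)|_{\gt v}$ (the paper phrases this as ``any linear map $\gt v\to\gt h/\gt v\cong\gt z$ is a commutator with some $\eta\in\gt v$''), and both read off $\widetilde{\gt l}\cap\gt h=\gt z$ from the Heisenberg structure. One small inaccuracy: for the inclusion $\gt z\subset\widetilde{\gt l}$ you cite $[\gt q,z]=0$, which is not among the hypotheses of this lemma (it is added only later, in Corollary~\ref{heis-non}); all you actually need is $[z,\gt v]=0$, which holds because $z$ is the centre of the Heisenberg algebra $\gt h$.
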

\begin{proof} 
The equality $\widetilde{\gt l}\cap\gt h=\gt z$ follows from the structure of $\gt h$. 
Take any $\xi\in\gt q$. Then $\ad(\xi)$ defines a linear map from $\gt v$ to $\gt h/\gt v\cong\gt z$. 
Any such map can be presented as a commutator with some $\eta\in\gt v$. 
Hence there is $v\in\gt v$ such that 
$\ad(\xi)-\ad(v)$ preserves $\gt v$. 
Here $\xi-v\in\widetilde{\gt l}$ and we are done. 
\end{proof}
 
The construction of Section~\ref{sec-i2} generalises easily to the non-algebraic setting leading to the following statement. 
 
 \begin{cl} \label{heis-non} Keep the assumptions and notation of Lemma~\ref{non} and 
suppose additionally that $[\gt q,\gt z]=0$. Then 
$(\U(\gt q)[z^{-1}])^{\gt h} \cong\U(\widetilde{\gt l})[z^{-1}]$.  \qed 
\end{cl}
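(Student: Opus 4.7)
The plan is to extend the construction of Section~\ref{sec-i2} verbatim to this non-split situation. Since $[\xi, \gt v] \subset \gt v$ for $\xi \in \widetilde{\gt l}$ by the very definition of $\widetilde{\gt l}$, the formula
$$
\hat\xi = \xi + \frac{1}{2z}\sum_{i=1}^n\bigl([\xi,x_i]y_i - [\xi,y_i]x_i\bigr) \in \U(\gt q)[z^{-1}]
$$
makes sense for every $\xi \in \widetilde{\gt l}$. Note in particular that $\hat z = z$, since $[z,\gt v] = 0$ by the added hypothesis $[\gt q,\gt z] = 0$. The proofs of Lemmas~\ref{v-com} and~\ref{lm-hat} use only (i) the $\widetilde{\gt l}$-stability of $\gt v$; (ii) the centrality of $z$ in $\gt q$; (iii) the Heisenberg relations $[x_i,y_j] = \delta_{ij}z$; and (iv) the fact that $\gt h$ is an ideal of $\gt q$. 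All four hold here, so these lemmas apply without change and yield $[v,\hat\xi] = 0$ for every $v\in\gt h$ together with $[\hat\xi,\hat\eta] = \widehat{[\xi,\eta]}$ for all $\xi,\eta \in \widetilde{\gt l}$.

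Consequently $\xi \mapsto \hat\xi$ is a Lie algebra homomorphism $\widetilde{\gt l} \to (\U(\gt q)[z^{-1}])^{\gt h}$, which extends, using that $\hat z = z$ is already invertible, to an associative algebra homomorphism
$$
\Phi\!: \U(\widetilde{\gt l})[z^{-1}] \to (\U(\gt q)[z^{-1}])^{\gt h}.
$$
To verify that $\Phi$ is an isomorphism I would pass to the associated graded with respect to the standard filtration on $\U(\gt q)[z^{-1}]$, extended by assigning $z^{-1}$ degree $-1$. The resulting map $\gr \Phi\!: \cS(\widetilde{\gt l})[z^{-1}] \to (\cS(\gt q)[z^{-1}])^{\gt h}$ is the classical Poisson analogue of the statement we seek; since the standard filtration is exhaustive and Hausdorff, it suffices to prove that $\gr \Phi$ is bijective.

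Establishing the symmetric-algebra version is the one step that genuinely needs to be checked in the non-split setting, and it is the main obstacle. The proof of \cite[Lemma~18]{int} is written under the assumption $\gt q = \gt l \ltimes \gt h$, but its argument is essentially local on $\gt h^*$: for a generic $\alpha \in \gt h^*$ the $\gt h$-invariants on the fiber $Y_\alpha\subset\gt q^*$ reduce to regular functions on $\gt q_\alpha/\gt h$, and the global invariants are reassembled from these fiberwise descriptions. In the present situation one fixes a vector-space complement $\gt m$ of $\gt z$ inside $\widetilde{\gt l}$, so that $\gt q = \gt m \oplus \gt h$ as a vector space and $\widetilde{\gt l} = \gt m \oplus \gt z$. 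Because $[\gt q,\gt z] = 0$, the $\gt h$-action on $\cS(\gt q)[z^{-1}]$ factors through $\gt h/\gt z \cong \gt v$, and the identification $(\cS(\gt q)[z^{-1}])^{\gt h} \cong \cS(\widetilde{\gt l})[z^{-1}]$ follows by running the local argument of \cite[Lemma~18]{int} with only cosmetic modifications. The injectivity and surjectivity of $\gr \Phi$ are then immediate from the explicit description of its values on the generators $\xi \in \widetilde{\gt l}$, completing the proof.
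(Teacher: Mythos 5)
Your argument is correct and follows the route the paper itself indicates: the paper asserts without detailed proof that ``the construction of Section~\ref{sec-i2} generalises easily to the non-algebraic setting,'' and your proposal fills in exactly that generalisation. You correctly identify that $\xi\mapsto\hat\xi$ makes sense for $\xi\in\widetilde{\gt l}$ because $\widetilde{\gt l}$ stabilises $\gt v$, that $\hat z=z$ since $[\gt q,\gt z]=0$, and that the computations in Lemmas~\ref{v-com} and~\ref{lm-hat} rely only on the Heisenberg relations, the centrality of $z$, the ideal property of $\gt h$, and the $\widetilde{\gt l}$-stability of $\gt v$ --- all still available here. Your reduction of bijectivity to the associated graded and then to the local (fiberwise over $\gt h^*$) argument of \cite[Lemma~18]{int} matches the logic underlying Corollary~\ref{cl-heis}; the observation that a vector-space complement $\gt m$ of $\gt z$ in $\widetilde{\gt l}$ gives $\gt q=\gt m\oplus\gt h$ and makes the split-case proof go through is precisely the ``easy generalisation'' the paper has in mind.
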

 
One more observation is required before we can start the induction. 

\begin{lm} \label{field}
In the reductive case, quantum MF-subalgebras $\ca_\gamma$ are defined over $\mathbb Q$ and hence over any field of characteristic zero. 
If $\gamma\in\gt g^*_{\sf reg}$, then $\trdeg\ca_\gamma=\bb(\gt g)$. 
\end{lm}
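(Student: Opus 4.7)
The plan is to treat the two assertions in turn. For the $\mathbb Q$-rationality claim, the idea is to unpack Rybnikov's construction of $\ca_\gamma$ and verify that each step is carried out over $\mathbb Q$. A split reductive Lie algebra $\gt g$ admits a Chevalley $\mathbb Q$-form, the Harish--Chandra/Chevalley description of the Poisson centre $\cS(\gt g)^{\gt g}$ as a polynomial algebra on $\rk\gt g$ generators of prescribed degrees holds over $\mathbb Q$, and the Feigin--Frenkel centre of the vacuum module at the critical level, which is what \cite{r:si} uses to produce the lift of $\bar\ca_\gamma$, is likewise defined over $\mathbb Q$. If $\gamma$ has rational coordinates in a Chevalley basis, then the shift-of-argument specialisation produces generators of $\ca_\gamma$ whose PBW-coefficients lie in $\mathbb Q$. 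For arbitrary $\mK$ of characteristic zero and arbitrary $\gamma \in \gt g^*$ one either runs the same construction directly over $\mK$ or applies base change from a $\mathbb Q$-model after choosing rational generators of the centre.

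For the transcendence degree, chain three ingredients that are already quoted in the paper. Rybnikov's theorem gives the identification of the associated graded, $\gr\!(\ca_\gamma) = \bar\ca_\gamma$, with respect to the standard filtration on $\U(\gt g)$. The result \cite{codim3} asserts $\trdeg \bar\ca_\gamma = \bb(\gt g)$ precisely under the regularity hypothesis $\dim \gt g_\gamma = \rk \gt g$, which is the content of $\gamma\in\gt g^*_{\sf reg}$. Finally, \cite[Satz~5.7]{bk-GK} equates the Gelfand--Kirillov dimensions of a filtered algebra and its associated graded, and for commutative affine algebras (or commutative domains) the GK-dimension coincides with the transcendence degree. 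Concatenating these equalities yields
$$
\trdeg \ca_\gamma \;=\; \trdeg \gr\!(\ca_\gamma) \;=\; \trdeg \bar\ca_\gamma \;=\; \bb(\gt g).
$$

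The main obstacle is the first assertion, which is really a statement about the functoriality of Rybnikov's construction under field extensions of $\mathbb Q$: once this is verified, $\mathbb Q$-rationality is immediate, but verifying it explicitly requires tracing through the affine Kac--Moody machinery and the isomorphism between the centre at critical level and the classical Poisson centre of the opposite arc algebra. One can alternatively appeal to explicit presentations (Chervov--Molev, Tarasov, Talalaev in classical types) in which the generators manifestly have $\mathbb Q$-coefficients. The transcendence-degree part, by contrast, is a short bookkeeping argument from the three quoted results and does not rely on anything new beyond \cite{r:si}, \cite{codim3}, and \cite{bk-GK}.
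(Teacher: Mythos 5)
Your transcendence-degree argument is essentially identical to the paper's: use $\gr\!(\ca_\gamma)=\bar\ca_\gamma$, the maximality result of \cite{codim3}, and \cite[Satz~5.7]{bk-GK}. One small imprecision: Rybnikov's construction directly gives only the \emph{inclusion} $\bar\ca_\gamma\subset\gr\!(\ca_\gamma)$; the equality for $\gamma\in\gt g^*_{\sf reg}$ is derived by combining this inclusion with the fact that $\bar\ca_\gamma$ is maximal Poisson-commutative, which is exactly the content of \cite{codim3}. You state the equality as if it were part of Rybnikov's theorem, which bundles two distinct inputs. You also do not explicitly perform the base-change step needed to carry $\trdeg\bar\ca_\gamma=\bb(\gt g)$ from $\overline{\mK}$ down to $\mK$, which the paper spells out, but this is routine.

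For the $\mathbb Q$-rationality claim, your route genuinely differs from the paper's, and here the difference matters. You propose to trace Rybnikov's construction step by step through the vacuum-module realisation of the Feigin--Frenkel centre and verify $\mathbb Q$-rationality at each stage; you yourself flag this as the main obstacle and admit it ``requires tracing through the affine Kac--Moody machinery.'' The paper sidesteps all of that by invoking the characterisation from \cite{r:un}: $\gt z(\widehat{\gt g})$ is the \emph{centraliser} in $\U(t^{-1}\gt g[t^{-1}])$ of $\Sym\!\big(\cS^2(\gt g t^{-1})^{\gt g}\big)$ (in the simple case, of the single quadratic element ${\mathcal H}[-1]$). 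Since $\cS^2(\gt g t^{-1})^{\gt g}$ visibly has a $\mathbb Q$-form and behaves functorially under field extension, so does its centraliser, and the $\mathbb Q$-rationality of $\gt z(\widehat{\gt g})$ is immediate without opening the FF black box. Your alternative suggestion (explicit generators of Chervov--Molev/Talalaev type) would also work but only covers classical types. As written, your proposal identifies the problem correctly but leaves the key step --- the argument that actually establishes $\mathbb Q$-rationality uniformly --- as a promissory note rather than a proof; the centraliser characterisation from \cite{r:un} is precisely the missing ingredient that discharges it.
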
 
\begin{proof}
Recall the construction from \cite{r:si}. 
Let $G$ be a connected {\bf complex} reductive algebraic group. Set $\gt g=\Lie G$. 
The universal enveloping algebra $\U\big(t^{-1}\gt g[t^{-1}]\big)$
contains a certain commutative subalgebra $\gt z(\widehat{\gt g})$, which is known as 
the {\it Feigin--Frenkel centre}. 
Set $\gt l=[\gt g,\gt g]$, $r=\dim\gt l$.
According to \cite{r:un}, $\gt z(\widehat{\gt g})$  
is the centraliser in $\U\big(t^{-1}\gt g[t^{-1}]\big)$ of the following 
quadratic element 
$$
{\mathcal H}[-1]=\sum_{a=1}^{r} x_a t^{-1} x_a t^{-1}, 
$$
where  $\{x_1,\ldots,x_{r}\}$ is any basis of $\gt l$ that is 
orthonormal w.r.t. the Killing form. 

For any $\gamma\in\gt g^*$ and
a non-zero $z\in\mathbb C$, the map 
\begin{equation} \label{ff-map}
\varrho_{\gamma,z}\!:\U\big(t^{-1}\gt g[t^{-1}]\big)\to \U(\gt g),
\qquad x t^r \mapsto  z^r x +\delta_{r,-1}\gamma(x),\quad x\in\gt g,
\end{equation}
defines  a $G_\gamma$-equivariant  algebra homomorphism. The image of $\gt z(\widehat{\gt g})$ 
under $\varrho_{\gamma,z}$
is a commutative subalgebra $\ca_{\gamma}$ of $\U(\gt g)$, which does not depend
on $z$ \cite{r:si}. Moreover, $\bar\ca_\gamma\subset \gr\!(\ca_\gamma)$ for each $\gamma\in\gt g^*$ \cite{r:si}.
If $\gamma\in\gt g^*_{\sf reg}$, then $\bar\ca_\gamma$ is a maximal w.r.t. inclusion Poisson-commutative 
subalgebra of $\cS(\gt g)$ \cite{codim3} and hence 
$\gr\!(\ca_\gamma)=\bar\ca_\gamma$. 

If $\gt l$ is simple, then $H[-1]=\gr\!({\mathcal H}[-1])$ spans 
$\cS^2(\gt l t^{-1})^{\gt g}$. In general, $\gt z(\widehat{\gt g})$  is the centraliser 
of $\Sym(\cS^2(\gt g t^{-1})^{\gt g})$. The subspace $\cS^2(\gt g t^{-1})^{\gt g}$ has a $\mathbb Q$-form and 
behaves well under field extensions. Its centraliser in $\U\big(t^{-1}\gt g[t^{-1}]\big)$ shares the
same properties. 

If $\gt g$ is a Lie algebra over $\mathbb K$ and $\mathbb K\subset\mathbb L$, then 
$$
\cS_{\mathbb L}^2(\gt g(\mathbb L) t^{-1})^{\gt g(\mathbb L)} = \cS^2(\gt g t^{-1})^{\gt g}\otimes_{\mK}\mathbb L
$$
for $\gt g(\mathbb L)=\gt g\otimes_{\mK}\mathbb L$. If $\gamma\in\gt g^*$ and $\gamma(\mathbb L)\in\gt g(\mathbb L)^*$ 
is its continuation, then 
$\bar\ca_{\gamma(\mathbb L)}=\bar\ca_\gamma\otimes_{\mK}\mathbb L$.  
Playing with extensions 
$\mathbb Q\subset \overline{\mathbb Q}\subset\mathbb C$ and $\mK\subset\overline{\mK}$, 
one shows that $\gt z(\widehat{\gt g})$ produces a quantum MF-subalgebra over any $\mK$.
In more details, since $\bar\ca_\gamma\subset \gr\!(\ca_\gamma)$ holds over $\mathbb C$, it holds over $\mathbb Q$ and 
$\overline{K}$, thereby it holds over $\mathbb K$. By \cite{codim3}, 
$\trdeg\bar\ca_\gamma =\bb(\gt g)$ for $\gamma\in\gt g^*_{\sf reg}$ over $\overline{\mK}$. Hence also 
$\trdeg\bar\ca_\gamma =\bb(\gt g)$ for $\gamma\in\gt g^*_{\sf reg}$ over $\mK$ and 
$\trdeg\ca_\gamma=\bb(\gt g)$ over $\mK$. 
\end{proof}
 
\begin{thm} \label{thm-U}
For each finite-dimensional Lie algebra $\gt q$, there is
a commutative algebra $\ca\subset\U(\gt q)$ with $\trdeg\ca=\bb(\gt q)$. 
\end{thm}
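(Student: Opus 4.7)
I would prove the theorem by induction on $\dim_{\mK}\gt q$, allowing the ground field to vary within characteristic zero so that the construction applies verbatim over $\mF=\mK(\gt h^*)$ in intermediate steps. The base case is reductive $\gt q$, for which Lemma~\ref{field} directly supplies $\ca_\gamma$ with $\trdeg\ca_\gamma=\bb(\gt q)$. For the inductive step, assuming $\gt q$ is non-reductive, the nilpotent radical $\gt n$ is nonzero, and Lemma~\ref{lm-h} provides a trichotomy: (a) $\gt n$ contains an abelian characteristic ideal of dimension $\ge 2$; (b) $\gt n=\mK z$; or (c) $\gt n$ is Heisenberg of dimension $2n+1$ with $n\ge 1$.

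The main reduction uses Section~\ref{sec-i1}. I would choose an abelian $\gt q$-ideal $\gt h$ of $\ad$-nilpotent elements with $\dim_{\mF}\hat{\gt q}<\dim_{\mK}\gt q$; by~\eqref{eq-hatq-dim} this is automatic whenever $\dim\gt h\ge 2$ or $\gt h$ is non-central in $\gt q$. Such an $\gt h$ is available in case (a), and in cases (b), (c) provided the center $\mK z$ of $\gt n$ is non-central in $\gt q$ (take $\gt h=\mK z$; characteristic ideals of $\gt n$ are $\gt q$-stable). Applying the inductive hypothesis over $\mF$ yields a commutative $\hat\ca\subset\U(\hat{\gt q})$ with $\trdeg_{\mF}\hat\ca=\bb(\hat{\gt q})=\bb(\gt q)-\dim\gt h+1$ (Lemma~\ref{lm-u-h}(ii)). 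I would then push $\hat\ca$ into $\U_\delta(\hat{\gt q})=(\U(\gt q)\otimes_{\U(\gt h)}\mF)^H$ using Lemma~\ref{lm-u-h}(i) and the identification $\U_\delta(\hat{\gt q})\cong\U(\hat{\gt q})/(\delta-1)$; since $\delta$ is central in $\U(\hat{\gt q})$, Lemma~\ref{delta} ensures the image $\ca'$ satisfies $\trdeg_{\mF}\ca'\ge\bb(\gt q)-\dim\gt h$. Finally I would adjoin $\cS(\gt h)=\U(\gt h)\subset\U(\gt q)^H$, which centralizes $(\U(\gt q)\otimes_{\U(\gt h)}\mF)^H$, and clear denominators to produce a commutative $\ca\subset\U(\gt q)^H$ with $\trdeg_{\mK}\ca\ge\dim\gt h+(\bb(\gt q)-\dim\gt h)=\bb(\gt q)$; equality follows from~\eqref{eq-trdeg}.

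This reduction handles (a) and the subcases of (b), (c) with $z$ non-central. In (b) with $z$ central, a short computation using $[\gt r,\gt r]\subset\gt n=\mK z$ shows $\ad(v)^3=0$ for every $v\in\gt r$, forcing $\gt r=\mK z$ and hence $\gt q=\gt s\oplus\mK z$ reductive, so this subcase never arises in the inductive step. For (c) with $z$ central, I would apply Lemma~\ref{non} to obtain $\widetilde{\gt l}=\{\xi\in\gt q:[\xi,\gt v]\subset\gt v\}$ of dimension $\dim\gt q-2n$; the inductive hypothesis supplies $\ca_{\widetilde l}\subset\U(\widetilde{\gt l})$ of $\trdeg=\bb(\widetilde{\gt l})$, which I would transport to $(\U(\gt q)[z^{-1}])^{\gt n}$ via Corollary~\ref{heis-non}, adjoin $\cS(L)$ for a Lagrangian subspace $L\subset\gt v$ (abelian and inside $\gt n$, hence commuting with the $\gt n$-invariants), and clear the central denominator $z$ to land inside $\U(\gt q)$. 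By an analogue of~\eqref{eq-bb-h}, the resulting transcendence degree equals $\bb(\widetilde{\gt l})+n=\bb(\gt q)$.

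The main obstacle will be the first reduction: Lemma~\ref{delta} only guarantees trdeg-loss of at most one at \emph{some} nonzero $c\in\mF$, so an extra observation is needed to pin down the specific specialization $\delta\mapsto 1$, exploiting that $\delta$ is essentially a formal variable introduced by localization. The denominator-clearing step must similarly be verified to preserve commutativity and supply the expected $\dim\gt h$ units of $\mK$-transcendence degree via the extension $\mF/\mK$.
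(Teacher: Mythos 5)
Your proposal is correct and takes essentially the same approach as the paper: induction on dimension with the reductive base case supplied by Lemma~\ref{field}, the Abelian-ideal reduction via $\hat{\gt q}$, Lemma~\ref{lm-u-h} and Lemma~\ref{delta}, and the Heisenberg reduction via Lemma~\ref{non} and Corollary~\ref{heis-non}. The small reorganizations you make --- disposing of the case $\gt n=\mK z$ with $z$ central by showing $\gt r$ consists of $\ad$-nilpotent elements instead of invoking indecomposability, and passing uniformly to $\widetilde{\gt l}$ rather than splitting the algebraic case ($\gt q=\gt l\ltimes\gt n$, where $\widetilde{\gt l}=\gt l\oplus\gt z$) from the non-algebraic one --- are valid and if anything streamline the argument, while the $\delta\mapsto 1$ specialization issue you flag is asserted without proof in the paper as well.
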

\begin{proof}
There is no harm in assuming that $\gt q$ is indecomposable. 
The case of a simple (reductive) Lie algebra $\gt g$ is settled by a result of Rybnikov \cite{r:si}, 
here  $\trdeg\ca_\gamma=\bb(\gt g)$ for a quantum Mishchenko--Fomenko subalgebra
$\ca_\gamma$, see also Lemma~\ref{field} and the Introduction.   
Therefore suppose that  $\gt n\ne 0$. 
In this case we argue by induction on $\dim\gt q$. The induction begins with $\dim\gt q=1$, where $\bb(\gt q)=1$ 
and there is nothing to prove. 

$\bullet$ \ Suppose first  that there is an Abelian Ideal $\gt h\lhd\gt q$ such that $\gt h\subset\gt n$ and 
$[\gt q,\gt h]\ne 0$ or $\dim\gt h>1$. Let $H$, $\mF$, and $\hat{\gt q}$ be the same as in Section~\ref{sec-i1}. 
We have 
$$
\dim_{\mF}\hat{\gt q} \le \dim_{\mF}(\gt q\otimes_{\gt h}\mF) = \dim_{\mK}\gt q-\dim\gt h+1.
$$
Moreover, $\dim_{\mF}\hat{\gt q} < \dim_{\mF}(\gt q\otimes_{\gt h}\mF)$ if $[\gt q,\gt h]\ne 0$. 
By the assumptions on $\gt h$,  $\dim_{\mF}\hat{\gt q}< \dim_{\mK}\gt q$. 
By the inductive hypothesis, $\U(\hat{\gt q})$ contains a commutative subalgebra  
$\widetilde{\ca}_1$ such that $\trdeg\!_{\mF}\,\widetilde{\ca}_1=\bb(\hat{\gt q})$. 
Without loss of generality, assume that $\ca_1$ contains the central element $\delta\in\hat{\gt q}$.  By Lemma~\ref{delta}, there is a non-zero $c\in\mF$ such that 
$\trdeg\!_{\mF}\,\ca_1=\bb(\hat{\gt q})-1$ for the image $\ca_1=\widetilde{\ca}_1(c)$ of $\widetilde{\ca}_1$ in 
$\U(\hat{\gt q})/(\delta-c)$. Here $\U(\hat{\gt q})/(\delta-c)\cong\U_{\delta}(\hat{\gt q})$.

According to Lemma~\ref{lm-u-h}, 
$\bb(\hat{\gt q})=\bb(\gt q)-\dim\gt h+1$ and $\U_\delta(\hat{\gt q})=(\U(\gt q)\otimes_{\U(\gt h)}\mF)^H$. Now we consider  $\ca_1$ as an subalgebra of $(\U(\gt q)\otimes_{\U(\gt h)}\mF)^H$. 
After multiplying the elements of $\ca_1$ by suitable elements of $\mF$, we may safely assume that 
$\ca_1\subset \U(\gt q)^H$. Let $\ca=\mathsf{alg}\!\left<\ca_1,\gt h\right>\subset \U(\gt q)$
be the algebra generated by $\ca_1$ and $\gt h$. Then $\ca$ is commutative
and $\trdeg\!_{\mK}\,\ca = \trdeg\!_{\mF}\,\ca_1  + \dim_{\mK} \gt h  = \bb(\gt q)$.

$\bullet$ \  Suppose now that $\gt n$  contains no commutative characteristic ideals $\gt h$ such that $\dim\gt h>1$ or $[\gt q,\gt h]\ne 0$. Then either $\dim\gt n=1$ or
$\gt n$ is a Heisenberg Lie algebra, see Lemma~\ref{lm-h}. Let $\gt z\subset \gt n$ be the centre 
of $\gt n$. Since $\gt z$ is an Abelian ideal of $\gt q$, we have $[\gt q,\gt z]=0$. 
We will treat the cases of an algebraic and a non-algebraic $\gt q$ separately. 
For both of them, let $z\in\gt z$ be a non-zero element. 

$\bullet$ \ Consider first the algebraic case, where $\gt q=\gt l\ltimes\gt n$. 
If 
$\gt n=\gt z$, then $\gt q$ is a sum of two ideals. 
This contradicts our assumption 
on $\gt q$. Thus, 
$\gt n$ is a Heisenberg Lie algebra such that  $\dim\gt n\ge 3$ and $[\gt q,\gt z]=0$. 
By Corollary~\ref{cl-heis}, $(\U(\gt q)[z^{-1}])^{\gt n}\cong\U(\gt l)\otimes_{\mK}\mK[z,z^{-1}]$. 
Since $\gt l$ is reductive, there is  a quantum Mishchenko--Fomenko subalgebra
$\ca_\gamma\subset\U(\gt l)$ with $\trdeg\ca_\gamma=\bb(\gt l)$. Let $\ca_1$ be the image of this subalgebra 
in  $(\U(\gt q)[z^{-1}])^{\gt n}$. 
After multiplying the elements of $\ca_1$ by suitable powers of $z$, we may safely assume that 
$\ca_1\subset \U(\gt q)^{\gt n}$. 
Set $\ca=\mathsf{alg}\!\left<\ca_1,x_1,\ldots,x_n,z\right>\subset \U(\gt q)$ in the notation of Section~\ref{sec-i2}. 
Then $\ca$ is commutative and $\trdeg\ca=\bb(\gt l)+n+1$.  In view of \eqref{eq-bb-h}, 
$\trdeg\ca=\bb(\gt q)$. 

$\bullet$ \ Finally let $\gt q$ be a non-algebraic Lie algebra. 
Then $\gt q=\gt s\ltimes\gt r$. If $\gt z=\gt n$, then $[\gt r,\gt n]=[\gt r,\gt z]=0$ and $\gt r$ is the nilpotent 
radical of $\gt q$. 
Since $[\gt q,\gt z]=0$, we have also $\gt q=\gt s\oplus\gt z$, which contradicts our assumption 
on $\gt q$. Hence $\dim\gt n\ge 3$. 

According to Corollary~\ref{heis-non}, 
$(\U(\gt q)[z^{-1}])^{\gt n} \cong\U(\widetilde{\gt l})[z^{-1}]$.  
This isomorphism implies that 
$\ind \widetilde{\gt l} =\ind \gt q$. Note that 
$\dim(\widetilde{\gt l})=\dim\gt q-\dim\gt n+1$. 
By the inductive hypothesis, $\U(\widetilde{\gt l})$ contains a commutative subalgebra  
$\gc$ such that $\trdeg\gc=\bb(\widetilde{\gt l})$.
It produces  a commutative subalgebra $\ca_1\subset \U(\gt q)^{\gt n}$ of the same 
transcendence degree. 
The rest of the argument does not differ from the algebraic case above. 
\end{proof}

\section{Commutative subalgebras in subrings of invariants} \label{l-inv}

Let $\gt l\subset\gt q$ be a subalgebra. Then $\gr\!(\U(\gt q)^{\gt l})=\cS(\gt q)^{\gt l}$. 
Since  $\U(\gt q)^{\gt l}$ is a domain, combining Eq.~\eqref{eq-trdeg-l} with \cite[Satz~5.7]{bk-GK}, we obtain
\begin{equation} \label{trdeg-ul}
\trdeg\ca\le \bb(\gt q)-\bb(\gt l)+\ind\gt l=:\bb^{\gt l}(\gt q),
\end{equation} 
for any commutative subalgebra $\ca\subset\U(\gt q)^{\gt l}$. 
Note that if $\gt l$ is Abelian, then $\bb(\gt l)=\dim\gt l=\ind\gt l$ and hence 
$\bb^{\gt l}(\gt q)=\bb(\gt q)$. 

For $\gt l=\gt q$, we have $\bb^{\gt l}(\gt q)=\ind\gt q$. This shows already that
the upper bound cannot  be achieved in all cases. There are Lie algebras such that 
$\U(\gt q)^{\gt q}=\mK$ and $\ind\gt q\ge 1$.  An easy example is a Borel subalgebra
$\gt b\subset\gt{sl}_3$, where $\ind\gt b=1$.  

Sections~\ref{sec-i1} and \ref{sec-i2} combined with the proof of
Theorem~\ref{thm-U} show that there are two positive and very useful cases. Namely, 
if $\gt l$ is either an Abelian ideal of $\gt q$ consisting of $\ad$-nilpotent elements or a normal  Heisenberg subalgebra such that 
$[\gt l,\gt l]$ lies in the centre of $\gt q$, then $\U(\gt q)^{\gt l}$ contains a commutative 
subalgebra $\ca$ such that $\trdeg\ca=\bb^{\gt l}(\gt q)$. 

Therefore it stands to reason to look for appropriate classes of pairs $(\gt q,\gt l)$. 
We will concentrate on the case, where  $\gt q=\gt g$ is reductive. 
The study of  $\U(\gt g)^{\gt l}$ is motivated by the application to the branching 
rules $\gt g\downarrow \gt l$.

\begin{spec}
Take $\gt g=\gt{gl}_{2n}$. Then $\gt g$ contains a commutative subalgebra $\gt l$ of dimension $n^2$.  
For example, $\gt l=\left<E_{ij} \mid i\le n, j>n\right>_{\mK}$. Note that $\bb(\gt g)=2n^2+n$. 
To the best of my knowledge, no one ever looked at commutative 
subalgebras of $\U(\gt g)^{\gt l}$ or their Poisson-commutative counterparts.     
That could  be an interesting class of commutative subalgebras of $\U(\gt g)$ of the maximal possible transcendence degree. If contrary to my expectations, $\U(\gt g)^{\gt l}$ does not contain a 
commutative subalgebra of the transcendence degree $2n^2+n$,  then any maximal commutative 
subalgebra of  $\U(\gt g)^{\gt l}$ would provide an example of  a maximal w.r.t. inclusion commutative subalgebra that does not have the maximal possible transcendence degree. 
\end{spec}

\subsection{Centralisers} \label{sub-cent}

Consider the case $\gt l=\gt q_\gamma$ with $\gamma\in\gt q^*$. Here 
$\bar\ca_\gamma\subset\cS(\gt q)^{\gt l}$. If a reasonable quantisation 
exists, it has to lie in $\U(\gt q)^{\gt l}$. This is indeed the case 
for the quantum MF-subalgebra $\ca_\gamma\subset\U(\gt g)$ of a reductive 
Lie algebra $\gt g$, cf. Eq.~\eqref{ff-map}. Moreover, $\trdeg\ca_\gamma=\bb^{\gt l}(\gt g)$, see \cite[Lemma~2.1\,\&\,Prop.~4.1]{m-y}. 
If $\gamma\in\gt g^*_{\sf sing}$ and $\gamma$ is semisimple, then $\gt l=\gt g_\gamma$ is a proper Levi subalgebra 
of $\gt g$. The importance of $\ca_\gamma$ in the description of the branching rule 
$\gt g\downarrow \gt l$ is discovered in \cite{cris}.

\subsection{Symmetric subalgebras} \label{sub-sym} 
Suppose now that $\gt l=\gt g_0=\gt g^\sigma$, where $\sigma$ is an involution of $\gt g$. 
Poisson-commutative subalgebras $\eus Z\subset\cS(\gt g)^{\gt l}$ such that 
$\trdeg\eus Z=\bb^{\gt l}(\gt g)$ are constructed in \cite{oy}. Unfortunately, no quantisation of those 
subalgebras is known in general. 

\begin{ex} Take $\gt g=\gt{so}_{n+1}$, $\gt l=\gt{so}_n$. Then 
$\U(\gt g)^{\gt l}$ is commutative and is  generated by the centres $\mathcal{ZU}(\gt g)$, $\mathcal{ZU}(\gt l)$.   
Furthermore, $\bb^{\gt l}(\gt g)=\trdeg\U(\gt g)^{\gt l}$. 
\end{ex}

\section{On the notion of maximality} 

Suppose that $\ca\subset\U(\gt q)$ is a commutative subalgebra such that 
$\trdeg\ca=\bb(\gt q)$. It does not have to be {\it maximal w.r.t. inclusion}, but it is not far from it. 
Assume that $\ca\subset\gc\subset\U(\gt q)$ and $[\gc,\gc]=0$, then 
each element of $\gc$ is algebraic over $\ca$ and $\gc\subset Z_{\U(\gt q)}(\ca)$.  

\begin{prop} \label{prop-max}
Let $\ca$ be as above. Then $Z_{\U(\gt q)}(\ca)$ is a maximal commutative subalgebra of 
$\U(\gt q)$. With obvious changes the statement holds  for commutative 
subalgebras $\ca\subset\U(\gt q)^{\gt l}$. 
\end{prop}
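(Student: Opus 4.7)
Set $\gc = Z_{\U(\gt q)}(\ca)$. The strategy is first to show that $\gc$ is itself commutative, from which maximality is immediate: any commutative subalgebra $\gc' \supseteq \gc$ satisfies $[\gc',\ca]=0$ because $\ca \subseteq \gc \subseteq \gc'$, hence $\gc' \subseteq Z_{\U(\gt q)}(\ca) = \gc$, so $\gc'=\gc$.

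To establish commutativity of $\gc$, I would show that every $x \in \gc$ is algebraic over $\ca$ and then apply Corollary~\ref{cl-cl}, which says precisely that the algebraic closure of $\ca$ inside $\gc = Z_{\U(\gt q)}(\ca)$ is a commutative subalgebra. Suppose, for contradiction, that some $x \in \gc$ is transcendental over $\ca$. Since $\U(\gt q)$ is a domain (PBW), the commutative subalgebra $\ca$ is a domain, and since $x$ commutes with $\ca$ by the definition of $\gc$, the subalgebra $\ca[x] \subset \U(\gt q)$ generated by $\ca$ and $x$ is commutative with $\trdeg \ca[x] = \trdeg \ca + 1 = \bb(\gt q) + 1$. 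This violates the bound $\trdeg \le \bb(\gt q)$ for commutative subalgebras of $\U(\gt q)$ recalled in the Introduction, obtained from Eq.~\eqref{eq-trdeg} together with \cite[Satz~5.7]{bk-GK}. Thus every $x \in \gc$ is algebraic over $\ca$, so $\gc$ coincides with the algebraic closure of $\ca$ in $\gc$, and Corollary~\ref{cl-cl} yields commutativity of $\gc$.

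For the variant with $\ca \subset \U(\gt q)^{\gt l}$ and $\trdeg \ca = \bb^{\gt l}(\gt q)$, the same argument carries over verbatim inside the filtered algebra $A = \U(\gt q)^{\gt l}$, whose associated graded $\cS(\gt q)^{\gt l}$ is a commutative domain (as a subring of $\cS(\gt q)$), with the centraliser now taken inside $\U(\gt q)^{\gt l}$ and the bound \eqref{trdeg-ul} playing the role of the bound $\bb(\gt q)$. There is no genuine obstacle: the proof is mechanical once Corollary~\ref{cl-cl} and the transcendence degree bound are in hand. The only point requiring a moment of care is verifying that $\ca[x]$ actually has transcendence degree $\trdeg\ca + 1$, which uses that $\ca$ is a domain and that $x$ is transcendental over $\ca$, both of which are guaranteed.
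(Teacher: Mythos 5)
Your proof is correct and follows essentially the same route as the paper: show that every element of $Z_{\U(\gt q)}(\ca)$ is algebraic over $\ca$ via the transcendence-degree bound, apply Corollary~\ref{cl-cl} to conclude commutativity of the centraliser, and deduce maximality directly from the definition of a centraliser. Your explicit element-by-element argument (passing through the commutative domain $\ca[x]$ to get the contradiction $\trdeg\ca[x]=\bb(\gt q)+1$) is slightly more spelled out than the paper's terse citation of the preceding paragraph, but the underlying logic is identical, including the handling of the $\U(\gt q)^{\gt l}$ variant via the bound in \eqref{trdeg-ul}.
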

\begin{proof}
Set $\gc=Z_{\U(\gt q)}(\ca)$. Since $\gc$ is an algebraic extension of $\ca$, it is commutative 
according to Corollary~\ref{cl-cl}. If $[\gc, x]=0$ for some $x\in \U(\gt q)$, then also 
$[\ca, x]=0$ and $x\in Z_{\U(\gt q)}(\ca)$. Hence $\gc$ is maximal. 
\end{proof}

If such an  $\ca$ is algebraically closed in $\U(\gt q)$, then it is maximal. Also if $\gr\!(\ca)$ is a 
maximal Poisson-commutative subalgebra of $\cS(\gt q)$, then 
$\ca$ is maximal. 
Both properties hold 
for the quantum 
Mishchenko--Fomenko subalgebras $\ca_\gamma\subset\cS(\gt g)$
with $\gamma\in\gt g^*_{\sf reg}$ 
\cite{codim3}. 
According to \cite{m-y}, $\ca_\gamma$ is a maximal commutative subalgebra  of $\U(\gt g)^{\gt g_\gamma}$ for 
any $\gamma\in\gt g^*$ if $\gt g$ is of type ${\sf A}$ or ${\sf C}$. 

The inductive steps in the proof of Theorem~\ref{thm-U} involve localisation. 
Therefore it is difficult to check, whether the  constructed subalgebras are maximal or not.  

\begin{ex}
Consider an easy example of a semi-direct product $\gt q=\gt l\ltimes\gt h$ with a Heisenberg Lie algebra.
Take $\gt l=\gt{sl}_2$ with a standard basis $\{e,h,f\}$ and 
$\gt h=\left<x,y,z\right>_{\mK}$. Suppose that $[e,y]=x$, $[e,x]=0$, $[f,x]=y$, $[f,y]=0$. 
Then over  $\mK[z,z^{-1}]$ the 
$\gt h$-invariants $\U(\gt q)^{\gt h}[z^{-1}]$ are generated by 
$$
zh + xy, \enskip 2ez - x^2, \enskip 2fz+y^2.
$$
Furthermore, $\cS(\gt q)^{\gt q}$ is generated by $z$ and 
$$
H_2=z(h^2+4ef)+2(hxy-fx^2+ey^2). 
$$
Identify $\gt{sl}_2\cong\gt{sl}_2^*$. Then we can take   the quantum MF-subalgebra of $\U(\gt{sl}_2)$
associated with either $h$ or $e$. In both cases, we pass to $\U(\gt q)^{\gt h}$ and 
add $x$ and $z$ as prescribed by the 
 proof of Theorem~\ref{thm-U}. 

The first algebra $\ca$ is 
$\mK[z,x,zh + xy,\Sym(H_2)]$. 
Calculations in the centraliser $\U(\gt q)^{\mK x}$ show that this one is maximal. 

The second algebra $\ca$ is 
different:
$$
\ca=\mK[z,x,2ez-x^2,\Sym(H_2)] \subset \mK[z,x,e,\Sym(H_2)].  
$$
It is not maximal.
\end{ex}

In 
the Abelian reduction step, we obtain $\ca=\mathsf{alg}\!\left<\ca_1,\gt h\right>$. 
If $\ca\subset\gc\subset\U(\gt q)$ and $\gc$ is commutative, then clearly 
$\gc\subset\U(\gt q)^{\gt h}$. However, some complications 
related to  denominators may appear here as well.

\end{document}